\pgfplotsset{compat=newest}
\newtheorem{theorem}{Theorem}[section]
\newtheorem{proposition}[theorem]{Proposition}
\newtheorem{lemma}[theorem]{Lemma}
\newtheorem{definition}[theorem]{Definition}
\theoremstyle{definition}
\newtheorem{example}[theorem]{Example}
\newtheorem{problem}[theorem]{Problem}
\newcommand{\R}{\mathbb{R}} 
\newcommand{\C}{\mathbb{C}}
\renewcommand{\vec}[1]{\bm{#1}}
\title[Constructing Orthogonal rational function vectors]{Constructing Orthogonal Rational Function Vectors with an application in rational approximation}
\author{Robbe Vermeiren}
\address{Numerical Analysis and Applied Mathematics (NUMA) unit, Department of Computer Science, KU Leuven, Leuven, Belgium.}
\curraddr{}
\email{robbe.vermeiren@kuleuven.be}
\thanks{}
\begin{document}
\begin{abstract}
    We present two algorithms for constructing orthonormal bases of rational function vectors with respect to a discrete inner product, and discuss how to use them for a rational approximation problem. Building on the pencil-based formulation of the inverse generalized eigenvalue problem by Van Buggenhout et al. (2022), we extend it to rational vectors of arbitrary length $k$, where the recurrence relations are represented by a pair of $k$-Hessenberg matrices, i.e., matrices with possibly $k$ nonzero subdiagonals. An updating algorithm based on similarity transformations using rotations and a Krylov-type algorithm related to the rational Arnoldi method are derived. The performance is demonstrated on the rational approximation of $\sqrt{z}$ on $[0,1]$, where the optimal lightning + polynomial convergence rate of Herremans, Huybrechs, and Trefethen (2023) is successfully recovered. This illustrates the robustness of the proposed methods for handling exponentially clustered poles near singularities.
\end{abstract}

\maketitle

\section{Introduction}
A well-studied problem in applied mathematics is finding an orthonormal basis of some finite vector space ${V}_n$ of functions with respect to a discrete inner product $\langle \cdot, \cdot \rangle$. In many cases, this problem can be solved with a Krylov-type algorithm by noting that there is an isometry between $V_n$ and some Krylov subspace $\mathcal{K}_n$, i.e., a distance-preserving bijection. By exploiting the structure of the recurrence relations that these orthonormal basis functions satisfy, one can reformulate the problem as an inverse eigenvalue problem (IEP). 
	
	For example, taking $V_n$ as the vector space of polynomials with degree $< n$ and $\langle p, q \rangle = \sum_{i=1}^n |w_i|^2\overline{p(z_i)}q(z_i),\; w_i,z_i \in \mathbb{C}$, yields an IEP where the goal is to find the orthogonal matrix $Q$ and recurrence matrix\footnote{Matrix containing the coefficients appearing in the recurrence relations.} $H \in \mathbb{C}^{n\times n}$ such that
	\begin{equation}\label{eq:polIEP}
		ZQ = QH, 
	\end{equation}
	where $Z=\operatorname{diag}([
	    z_i]_{i=1}^n)$, $H$ is upper-Hessenberg and the first column of $Q$ is some appropriate scaling of the vector containing the weights $w_i$. The above IEP simplifies when we take particular choices of nodes. For example, taking the nodes on the real line corresponds to a tridiagonal recurrence matrix~\cite{GraggHarrod1984}, while taking nodes on the complex unit circle results in an IEP with a unitary Hessenberg recurrence matrix~\cite{AmmarHe1995,AmmarGraggReichel}. Bultheel and Van Barel~\cite{BultheelVanBarel1995} generalized the polynomial IEP~\eqref{eq:polIEP} to orthogonal vector polynomials, i.e., vectors with polynomials as components. Further extensions to more specialized settings have been explored; for example, an IEP associated with Sobolev orthogonal polynomials \cite{FaghihVanBaVanBuVande2025}, which involve inner products including derivative terms, and an IEP linked to multiple orthogonal polynomials \cite{FaghihRinelliVanBaVandeVerme2025}, which satisfy orthogonality conditions with respect to a system of multiple measures.     

    In the present work, $V_n$ consists of vectors of rational functions --- referred to as \emph{rational function vectors}. Rational functions are fundamental to several non-linear approximation problems in, for example, model reduction~\cite{BultheelDeMoor2000}, system identification~\cite{NinnessGustafsson1997}, PDE problems~\cite{BrubeckTrefethen2022,Trefethen2020}. A popular approach is to linearize these problems by choosing the poles in advance~\cite{VanBarelBultheel1994,Baddoo2020}. Van Barel, Fasino, Gemignani and Mastronardi~\cite{VanBFasinoGemignaniMastronardi2005} handle the case where $V_n$ is the space of rational functions with prescribed poles $p_i$. They show that the corresponding IEP takes the form $ZQ = Q(S+D_p)$ where $S$ is a lower semiseparable matrix\footnote{A matrix where every submatrix of the lower triangular part has rank at most 1.} and $D_p$ is a diagonal matrix containing the poles. Instead of using this semiseparable-plus-diagonal structure, Van Buggenhout, Van Barel and Vandebril~\cite{VanBuVanBaVand2022} proposed the following alternative formulation of the IEP 
    \begin{equation}\label{eq:IEP}
		ZQK = QH,
	\end{equation}
    where $K$ and $H$ are upper-Hessenberg matrices. To make the connection with generalized eigenvalue problems, we will write the pair $K$ and $H$ as a matrix pencil $(H,K)$. They present an updating algorithm which directly operates on $(H,K)$ using sequences of rotation matrices, as well as a Krylov-type algorithm, which is a special case of the rational Arnoldi algorithm~\cite{Ruhe1984}. The term \emph{updating} algorithm refers to the process where, at each step, we start from the solution $(H_k, K_k)$ associated with an inner product defined by $k$ nodes and $k$ weights, and update it to the pencil $(H_{k+1}, K_{k+1})$ corresponding to an inner product with one additional node and weight. 
	
    In this paper, we present two algorithms for constructing an orthonormal basis for rational function vectors based on a pencil representation of the IEP. More specifically, for vectors of length $k$, we show that the IEP retains the same form as in equation \eqref{eq:IEP}, where $(H,K)$ is a pencil containing two $k$-Hessenberg matrices, which means that the entries $h_{ij} = k_{ij} =0,\; \text{for}\; i > j+k$. Similarly as in~\cite{VanBuVanBaVand2022}, we derive an updating algorithm and Krylov-type algorithm. Delvaux and Van Barel~\cite{DelvauxVanBa2005} previously investigated how the semiseparable-plus-diagonal structure of the IEP corresponding to rational functions generalizes to the vector case. However, as we will see in the algorithms, the pencil representation is more convenient to work with. Finally, we demonstrate that orthogonal rational function vectors can be used for obtaining rational approximations to functions with branch point singularities. In particular, we focus on the function $\sqrt{z}$ on the interval $[0,1]$. For this case, Herremans, Huybrechs, and Trefethen~\cite{HerremansHuybrechsTrefethen2023} show that the optimal minimax convergence rate is achieved using a so-called \emph{lightning-plus-polynomial} approximation of the form
    \begin{equation*}
    r(z) = \sum_{j=1}^{N_1}\frac{a_j}{z-p_j} + \sum_{j=0}^{N_2}b_jz^j,
    \end{equation*}
    where the poles $p_j$ are tapered and exponentially clustered near the singularity at $z=0$. With the poles fixed a priori, the problem simplifies to a least-squares problem. By linearizing this formulation, we show that our orthogonal rational function framework yields approximants with optimal convergence rates.
    
    The paper is organized as follows.
    Section~\ref{sec:problemformulation} introduces the theoretical framework needed to formulate the IEP.
    Section~\ref{sec:updating} describes the updating algorithm, while Section~\ref{sec:krylov} handles the rational Krylov-type method.
    In Section~\ref{sec:numexp}, we compare the methods for two instances of the IEP. Section~\ref{sec:application} handles the application of our algorithms to the rational approximation of $\sqrt{z}$ on $[0,1]$ and demonstrates the robustness for poles that are exponentially clustered around a singularity. Finally, Section~\ref{sec:conclusion} provides concluding remarks and gives some future research directions. Both algorithm implementations and all experiments are publicly available at
    \begin{center}
        \url{https://gitlab.kuleuven.be/numa/public/iep-ratvec}.
    \end{center}

\section{Problem formulation}\label{sec:problemformulation}
In this section, we discuss the necessary machinery needed to reformulate the problem of constructing a set of orthonormal rational function vectors in terms of an inverse eigenvalue problem. These vectors are orthonormal with respect to a certain given discrete inner product. For the sake of exposition, this paper deals with the case of $2{-}$dimensional vectors. A generalization to more than two components is straightforward. The vectors are elements of the following complex vector space.
\begin{definition}\label{def:rationalvectors}
    Let $\vec p_n = \begin{bmatrix}
      p_1, p_2, \ldots, p_n  
    \end{bmatrix}^T
    \;\text{with all}\; p_k \in \overline\C = \C \cup \{\infty\},$ and $p_1=p_2=\infty$ be the pole vector, and let $\vec \pi_n = \begin{bmatrix}
        \pi_1 & \pi_2 & \ldots & \pi_n
    \end{bmatrix}$ with $\pi_k \in \{1,2\}$, where $\pi_1=1$ and $\pi_2=2$ be the indexing vector. We define the complex vector space $\mathcal{R}_n \subset \C(z)^2$ such that
    \begin{itemize}
        \item $\mathcal{R}_1 = \operatorname{span}\left\{ \vec v_1 \right\}, \; \mathcal{R}_2 = \operatorname{span}\left\{ \vec v_1,\vec v_2 \right\} $ with $\vec v_1 = \begin{bmatrix}
            1\\
            0
        \end{bmatrix}, \vec v_2 = \begin{bmatrix}
            0\\
            1
        \end{bmatrix};$ \\
        \item $\mathcal{R}_k = \operatorname{span}\{\vec v_1, \vec v_2, \ldots, \vec v_k\} \subset \mathcal{R}_{k+1}$ with $\vec v_k$ having a pole in the first or second component:\\
        \[
\begin{tikzpicture}
  \node[inner sep=0pt, anchor=north west] (left) at (0,0) {
    \begin{minipage}{0.40\textwidth}
      \centering
      First component ($\pi_k = 1$) \\[6pt]
      \[
      \hspace{0.6cm}
        v_k =
        \begin{cases}
          \begin{bmatrix}
            \dfrac{1}{\,z - p_k\,} & 0
          \end{bmatrix}^T & \text{$p_k \in \mathbb{C}$}, \\[10pt]
          \begin{bmatrix}
            z^\ell & 0
          \end{bmatrix}^T & \text{$p_k = \infty$}
        \end{cases}
      \]
    \end{minipage}
  };

  \node[inner sep=0pt, anchor=north west] (right) at ($(left.north east)+(1.5cm,0)$) {
    \begin{minipage}{0.4\textwidth}
      \centering
      Second component ($\pi_k = 2$) \\[6pt]
      \[
        v_k =
        \begin{cases}
          \begin{bmatrix}
            0 & \dfrac{1}{\,z - p_k\,}
          \end{bmatrix}^T & \text{$p_k \in \mathbb{C}$}, \\[10pt]
          \begin{bmatrix}
            0 & z^\ell
          \end{bmatrix}^T & \text{$p_k = \infty$}
        \end{cases}
      \]
    \end{minipage}
  };

  \draw[dashed]
    ($(left.north east)!0.5!(right.north west)$) --
    ($(left.south east)!0.5!(right.south west)$);
\end{tikzpicture}
\]\\
        with simple finite poles in each component and $\ell$ being the number of times the pole at $\infty$ appears in $\mathcal{R}_{k-1}$ for the corresponding component. 
    \end{itemize}
\end{definition}

By simple, we mean that all finite poles appearing in a certain component must be different. However, the pole at infinity may have higher multiplicity. One could generalize this definition to allow higher multiplicities, by adding vectors containing $(z-p_k)^{-k}$ with $k > 1$. Furthermore, observe that the definition of $\vec v_k$ when $p_k$ is infinity agrees with setting $p_1$ and $p_2$ equal to infinity. We denote the two component functions of a rational vector function $\vec \phi_i(z)\in \mathcal{R}_n$ as $\phi_{i,1}(z)$ and $\phi_{i,2}(z)$. Such component functions $\phi_{i,j}(z)$ can be decomposed in a fractional part and polynomial part
\begin{equation*}
    \phi_{i,j}(z) = \sum_{k=1}^{\deg_{f}(\phi_{i,j})} \frac{1}{z-p_k} + \sum_{k=0}^{\deg_p{(\phi_{i,j})}} z^k,  
\end{equation*}
where ${\deg_{f}(\phi_{i,j})}$ and ${\deg_{p}(\phi_{i,j})}$ will be referred to as the \emph{fractional} and \emph{polynomial} degree of $\phi_{i,j}(z)$, respectively. Let us illustrate this definition with a small example.
\begin{example}
    For $n=7$, assume that $\vec \pi_n = \begin{bmatrix} 1,2,1,1,2,1,1\end{bmatrix}$ and $\vec p_n = \begin{bmatrix}
        \infty, \infty, \infty, p_4, p_5, \infty, p_7
    \end{bmatrix}$ with $p_4,p_5,p_7 \in \C$, and $p_4 \neq p_7$ (only finite simple poles), then \begin{equation*}
        \mathcal{R}_n = \operatorname{span}\left\{ \begin{bmatrix}
            1\\
            0
        \end{bmatrix}, \begin{bmatrix}
            0\\
            1
        \end{bmatrix}, \begin{bmatrix}
            z\\
            0
        \end{bmatrix}, \begin{bmatrix}
            \frac{1}{z-p_4}\\
            0
        \end{bmatrix}, \begin{bmatrix}
            0\\
            \frac{1}{z-p_5}
        \end{bmatrix}, \begin{bmatrix}
            z^2\\
            0
        \end{bmatrix},\begin{bmatrix}
            \frac{1}{z-p_7}\\
            0
        \end{bmatrix}\right\}.
    \end{equation*}
\end{example}

To define the notion of orthogonality, we introduce a discrete inner product.
\begin{definition}\label{def:innerprod}
    Let $z_{i}\in \C,\;i=1,\ldots,n$ be a set of nodes, all distinct from the poles, i.e., $z_{i} \neq p_{k}$ for all $i,k$. For each node $z_{i}$, we have a corresponding weight vector $\bm{w}_{i} \in \C^2$, different from the zero vector. Let $\bm{\phi}(z), \bm{\psi}(z) \in \mathcal{R}_n$, then define the (pseudo)-inner product as
    \begin{equation}\label{eq:innerprod}
        \left\langle \bm{\phi}(z), \bm{\psi}(z) \right\rangle_{\mathcal{R}_n} = \sum_{i=1}^{n} \vec \psi(z_{i})^H\vec w_i\vec w_{i}^H\vec \phi(z_{i}), 
    \end{equation}
    where ${\cdot}^H$ denotes the complex conjugate transpose.
\end{definition}
Note that for general weights $\vec w_i$, equation \eqref{eq:innerprod} does not define a true inner product. To have an inner product, we need that $\langle \cdot,\cdot \rangle_{\mathcal{R}_n}$ is positive-definite, i.e., $\langle \vec \phi(z), \vec \phi(z) \rangle_{\mathcal{R}_n} = 0$ if and only if $\phi(z) = \vec 0$. A necessary but insufficient condition is that the weight matrix \begin{equation*}
    W=\begin{bmatrix}
         \vec w_{1},\vec w_{2}, \ldots, \vec w_{n}
\end{bmatrix}^T \in \C^{n\times 2}
\end{equation*} has full column rank. Then, $W^HW$ is a matrix corresponding to a positive-definite inner product on $\C^{2}$.
To ensure positive-definiteness on $\mathcal{R}_n$, we assume that $\vec w_{i}^H\vec \phi(z_{i})$ does not vanish for all $i$. If this assumption fails, it will result in an algorithmic breakdown.


\begin{problem}\label{problem:general}
    Let $\langle \cdot, \cdot \rangle_{\mathcal{R}_n}$ be a discrete inner product as in Definition  \ref{def:innerprod}, construct an orthonormal set $\left \{ \vec \phi_i \right \}_{i=1}^{n}$, where $\vec \phi_{i} \in \mathcal{R}_i \setminus \mathcal{R}_{i-1}$, meaning that
    $\left \langle \vec \phi_i,\vec \phi_j \right \rangle_{\mathcal{R}_n} = \delta_{i,j}$.
\end{problem}
We now show how to cast this problem as an IEP. Consider the ``economical''\footnote{Only computes the essential part of the QR factorization, i.e. $Q_W\in \C^{n{\times} 2}$ and $R_W \in \C^{2\times 2}$.} QR factorization of the weight matrix $W$:
\begin{equation*}
    W = Q_WR_W.
\end{equation*}
Then, it is easy to see that $\begin{bmatrix}
    \vec \phi_1 & \vec \phi_2
\end{bmatrix} = R^{-1}_W = \begin{bmatrix}
    \vec r_{W,1} & \vec r_{W,2}
\end{bmatrix}$, where $\vec r_{W,1}$ and $\vec r_{W,2}$ are the first and second column of $R^{-1}_{W}$. Indeed, the vectors $\vec \phi_1$ and $\vec \phi_2$ span $\mathcal{R}_2$ it is assumed that $W$ has full column rank, and they are orthonormal 
\begin{align*}
    \begin{split}
        \|\phi_i\|_{\mathcal{R}_n}^2 &= \langle \vec \phi_i, \vec \phi_i\rangle_{\mathcal{R}_n} = \langle W\vec r_{W,i}, W\vec r_{W,i}\rangle = \|Q_W\vec e_i\|^2 = 1, \;\; i\in\{1,2\} \\
        \langle\vec \phi_1,\vec \phi_2 \rangle_{\mathcal{R}_n} &= \langle W\vec r_{W,1}, W\vec{r}_{W,2} \rangle = \langle Q_W\vec e_1, Q_W\vec e_2\rangle = 0, 
    \end{split}
\end{align*}
where $\langle \cdot, \cdot \rangle$ denotes the standard Euclidean inner product. Let $K,H \in \C^{\infty\times \infty}$ be two infinite 2-Hessenberg matrices, i.e., the elements $k_{ij} = h_{ij} =0,\; \text{for}\; i > j+2$, and consider the recurrence relation:
\begin{equation}\label{eq:reccrelation1}
    z\begin{bmatrix}
        \vec \phi_1(z), \vec \phi_2(z),\ldots
    \end{bmatrix} K = \begin{bmatrix}
        \vec \phi_1(z),  \vec \phi_2(z),\ldots
    \end{bmatrix} H. 
\end{equation}
Looking at the $j$th column of this relation gives
\begin{align*}\label{eq:reccrelation2}
\begin{split}
    \left(zk_{j+2,j} - h_{j+2,j} \right) \vec \phi_{j+2}(z) &= -\sum_{i=1}^{j+1}\left( zk_{ij}- h_{ij}\right) \vec \phi_i(z) \\
    &= \begin{bmatrix}
        \varphi_{j,1}(z) \\
        \varphi_{j,2}(z)
    \end{bmatrix}.
    \end{split}
\end{align*}
Thus, the new vector $\vec \phi_{j+2}$ which expands $\mathcal{R}_{j+1}$ to $\mathcal{R}_{j+2}$ is a \emph{combination} of the previous vectors $\{\vec \phi_i\}_{i=1}^{j+1}$ divided by the linear term $(zk_{j+2,2}-h_{j+2,j})$. Since $\phi_{j+2}\in \mathcal{R}_{j+2}$, it makes sense to impose $p_{j+2} = \frac{h_{j+2,j}}{k_{j+2,j}}$. Recall, from Definition~\ref{def:rationalvectors}, that the vector spaces $\mathcal{R}_n$ were constructed so that any additional pole appears in only one of the component functions. However, since both component functions $\varphi_{j,1}(z)$ and $\varphi_{j,2}(z)$ are divided by $(z-p_{j+2})$, the pole $p_{j+2}$ is introduced in both components. Assume the pole should only get added in the first component function $(\pi_k=1)$. We can guarantee this, as will be explained in Lemma~\ref{lemma:poles}, by adding a zero at $p_{j+2}$ in the function $\varphi_{j,2}(z)$.


If we assume $\vec \phi_i \in \mathcal{R}_i \setminus \mathcal{R}_{i-1}, \; i=1,\ldots, j+1$, which is trivially satisfied for $\vec \phi_1$ and $\vec \phi_2$, we get $\vec \phi_{j+2} \in \mathcal{R}_{j+2} \setminus \mathcal{R}_{j+1}$. The orthonormality of the vectors $\vec \phi_i$ is expressed by a matrix $Q^\infty\in \C^{n\times\infty}$ which is defined as
\begin{equation*}
    Q_{ij} = \vec w^H_i\vec \phi_j(z_i), \; i=1,\ldots,n,\; j=1,\ldots,\infty. 
\end{equation*}
The orthogonality of the $\vec \phi_i'$s with respect to the discrete inner product $\langle \cdot ,\cdot\rangle_{\mathcal{R}_n}$ is written as
\begin{equation*}
    {(Q)}^HQ = D,
\end{equation*}
with $D$ an infinite diagonal matrix. Because $Q^\infty$ has rank $n$, $D$ has rank less than or equal to $n$. We assume that the first $n$ columns of $Q^\infty$ have euclidean length 1. In that case, the other columns must have euclidean length 0. Furthermore, this implies that the matrix $Q_n$ containing the first $n$ columns of $Q$ is unitary and that $\vec w_i^H\vec \phi_{j}(z_i) = 0$ for all $i$ and $j=n+1,\ldots,\infty$. Taking the first $n$ columns of equation~\eqref{eq:reccrelation1}, evaluating in all nodes $z_i$, and multiplying with $\vec w_i^H$ results in
\begin{equation*}
    ZQ_nK_n=QH_n,
\end{equation*}
where $K_n$ and $H_n$ are the leading principal $n\times n$ submatrices of $K$ and $H$.
We summarize all the above conditions in the following IEP.
\begin{problem}{(\textbf{IEP})}\label{problem:IEP}
    Given a diagonal matrix $Z \in \C^{n\times n}$ of nodes of the form $\operatorname{diag}\left([
        z_{i}]_{i=1}^{n}\right)$, weight matrix $W=\begin{bmatrix}
        \vec w_{1},\vec w_{2}, \ldots, \vec w_{n}
    \end{bmatrix}^T \in \C^{n\times 2}$ and poles $p_{i} \in \overline\C, \; i=1,\ldots, n$. Construct a $2$-Hessenberg pencil $\left( H, K\right)$, and a unitary matrix $Q$ both belonging to $\C^{n \times n}$, such that
    \begin{enumerate}[label=(D\arabic*)]
        \item\label{cond:1} $Z Q K = Q H$,
        \item\label{cond:2} $W = Q\begin{bmatrix}
            \vec e_1& \vec e_2
        \end{bmatrix} R$ (``economic'' QR factorization),
        \item\label{cond:3} $p_{i+2} = \frac{h_{i+2,i}}{k_{i+2,i}}, i=1,\ldots,n-2$,
        \item\label{cond:4} $\{\vec \phi_i\}_{i=1}^n$ generated by $(H,K)$ (cf.\ equation \eqref{eq:reccrelation1}) is an element of $\mathcal{R}_i \setminus \mathcal{R}_{i-1}$. 
    \end{enumerate}
\end{problem}
 The final condition~\ref{cond:4} is essential because our chosen configuration for $\mathcal{R}_i$ introduces poles in only one of the two components. As previously discussed, the matrices $K$ and $H$ must be selected carefully, to ensure that the pole is placed in the correct component.

As we will see in Section \ref{sec:krylov}, a Stieltjes procedure on $\mathcal{R}_n$ leads to the same conditions as described in the above IEP. Furthermore, it is straightforward to see how the IEP generalizes to vectors of arbitrary length $k$. Indeed, the matrices $H$ and $K$ will be $k$-Hessenberg and the poles equal the ratios of the $k$-th lower diagonals.

\section{Updating Algorithm}\label{sec:updating}
This section presents an updating procedure for recursively solving Problem \ref{problem:IEP}. The idea is to start from a solution for $\mathcal{R}_n$ and update it to obtain the solution for $\mathcal{R}_{n+1} \supset \mathcal{R}_n$. The base cases $\mathcal{R}_1$ and $\mathcal{R}_2$ are given in the following lemma.
\begin{lemma}
    There exists a solution to Problem \ref{problem:IEP} for $n=1$ and $n=2$. 
\end{lemma}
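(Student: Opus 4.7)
My plan is to exhibit explicit solutions in both base cases and verify conditions \ref{cond:1}--\ref{cond:4} directly, without invoking any recursion. Since $n\le 2$, the $2$-Hessenberg constraint is vacuous (there are no entries with $i>j+2$), condition \ref{cond:3} has an empty index range, and \ref{cond:4} only requires me to place $\vec \phi_1$ and $\vec \phi_2$ in the correct spaces $\mathcal{R}_1$ and $\mathcal{R}_2\setminus\mathcal{R}_1$. So the whole verification collapses to elementary linear algebra on $1\times 1$ and $2\times 2$ matrices.

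For $n=1$, the space $\mathcal{R}_1 = \operatorname{span}\{\vec v_1\}$ is one-dimensional, so any candidate has the form $\vec \phi_1 = \alpha \vec v_1$. The normalization $\langle \vec \phi_1, \vec \phi_1\rangle_{\mathcal{R}_1} = 1$ forces $|\alpha|^2 |w_{1,1}|^2 = 1$, which is solvable precisely when $w_{1,1}\neq 0$ — exactly the breakdown condition $\vec w_1^H \vec \phi_1(z_1)\neq 0$ noted after Definition \ref{def:innerprod}. I would then take $Q=[\,\overline{w_{1,1}}/|w_{1,1}|\,]$, $K=[1]$, $H=[z_1]$, after which \ref{cond:1} reduces to $z_1 Q = Q z_1$, \ref{cond:2} degenerates to the scalar identity $w_{1,1} = Q\,|w_{1,1}|$, and \ref{cond:3}--\ref{cond:4} impose no further constraint.

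For $n=2$, I would invoke the QR factorization already introduced just before Problem \ref{problem:IEP}. Since $W\in\C^{2\times 2}$, the ``economic'' QR coincides with the standard square QR $W = QR$, with $Q\in\C^{2\times 2}$ unitary and $R\in\C^{2\times 2}$ upper triangular and invertible whenever $W$ has full column rank (otherwise $\langle\cdot,\cdot\rangle_{\mathcal{R}_2}$ fails to be positive definite). I would then choose $K = I_2$ and $H = Q^H Z Q$, and set $[\vec \phi_1,\vec \phi_2] := R^{-1}$. Condition \ref{cond:1} is immediate from $ZQK = ZQ = Q(Q^H Z Q) = QH$; condition \ref{cond:2} holds because $Q[\vec e_1,\vec e_2] = Q$ for a $2\times 2$ matrix; condition \ref{cond:3} is vacuous. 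The step that requires a moment of thought — and the only real ``obstacle'' in this otherwise mechanical verification — is \ref{cond:4}: because $R$ is upper triangular with nonzero diagonal, so is $R^{-1}$, and therefore the first column of $R^{-1}$ is a nonzero multiple of $\vec v_1\in\mathcal{R}_1$, while the second column has a nonzero $\vec v_2$ coefficient (equal to $R^{-1}_{22}$) and hence lies in $\mathcal{R}_2\setminus\mathcal{R}_1$, as required.
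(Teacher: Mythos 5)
Your proposal is correct and takes essentially the same route as the paper: both handle $n=1$ trivially and both build the $n=2$ solution from the QR factorization $W=Q_1R$ of the weight matrix, observing that conditions \ref{cond:3}--\ref{cond:4} are vacuous or automatic at this size. The only cosmetic difference is the choice of pencil for $n=2$ — you take $(H,K)=(Q_1^HZQ_1,\,I_2)$, whereas the paper takes $(H,K)=(Q_1^HZ,\,Q_1^H)$; these differ by a right-multiplication by $Q_1$, and both satisfy \ref{cond:1}, while your explicit check that $R^{-1}$ is upper triangular makes the verification of \ref{cond:4} a bit more careful than the paper's bare assertion of triviality.
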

\begin{proof}
    The case $n=1$ is trivial since we can set $Z = z_1$, $(H,K) = (z_1,1)$ and $Q=1$.
    For the case $n=2$, we have to find $2{\times}2$-matrices $H,K$ and a unitary $Q$ such that conditions \ref{cond:1} and \ref{cond:2} are satisfied. Note that the third and fourth conditions are trivially satisfied. Let $\widetilde{Q}=\widetilde{K}=I_2$ and $\widetilde{H}=Z=\operatorname{diag([
        z_1 \; z_2])}$, then $Z\widetilde{Q}\widetilde{K} = \widetilde{Q}\widetilde{H}$ is trivially true. Now, consider a $QR$-decomposition of the weight matrix 
    \begin{equation*}
    W_2 = \begin{bmatrix}
        \vec w^T_{1} \\
        \vec w^T_{2}
    \end{bmatrix} = {Q_1}\begin{bmatrix}
        r_{11} & r_{12} \\
        0 & r_{22}
    \end{bmatrix}.    
    \end{equation*}
    It is easy to check that $(H,K) = (Q_1^H\widetilde{H}, Q_1^H\widetilde{K})$ and $Q = Q_1$ satisfy the requirements.
\end{proof}
The induction step in the updating procedure uses a special type of unitary matrices, namely rotations, see e.g., the monograph by Golub and Van Loan~\cite[Chap. 5]{GolubVanLoan2013}. These are essentially $2{\times2}$ unitary matrices of the form
\begin{equation}\label{eq:Givens}
    G_{i,j} = 
       \begin{bNiceMatrix}[last-row,last-col,nullify-dots]
1& & \Vdots & & & & \Vdots \\
& \Ddots[line-style=standard] \\
\Cdots & & \bar{c}& \Cdots & & &  -\bar{s} &  & \Cdots & \leftarrow i \\
& & \Vdots&1  \\
& & & &  \Ddots[line-style=standard] & & \Vdots \\
& & & & & 1 \\
\Cdots & & s& \Cdots & & \Cdots & c & &  \Cdots & \leftarrow j \\
& & & & & & & \Ddots[line-style=standard] \\
& & \Vdots&  & & &  \Vdots &  &1 \\
& &   \overset{\uparrow}{i} & & & & \overset{\uparrow}{j} \\
\end{bNiceMatrix},
\end{equation}
where $s,c\in \C$ and $\bar{c}c+\bar{s}s=1$. The indices $i$ and $j$ refer to the rows and columns which the rotation acts upon. In our updating algorithm, we mostly use these rotations to eliminate some element in a matrix or vector. For example, one can find an appropriate $G_{1,2} \in \C^{2\times2}$ such that
\begin{equation*}
    G_{1,2}\begin{bmatrix}
        r_1 \\ 
        r_2
    \end{bmatrix} = \begin{bmatrix}
        r \\
        0
    \end{bmatrix}.
\end{equation*}

Before showing how to recursively solve Problem~\ref{problem:IEP}, we need two auxiliary results. The first lemma shows how to transform lower triangular ${2{\times} 2}$ pencils to upper triangular pencils by using rotations. 
\begin{lemma}\label{lemma:swapzeros}
    Let $(A,B)$ be a pencil of lower triangular $2{\times}2$-matrices, then there exist rotations $G_L, G_R \in \C^{2\times2}$ such that
    \begin{equation*}
        G_L(A,B)G_R = (\widehat{A}, \widehat{B}),
    \end{equation*}
    where $(\widehat{A}, \widehat{B})$ is a pencil of upper triangular $2{\times}2$-matrices and having the same generalized eigenvalues as $(A,B)$ at the same positions on the diagonal.
\end{lemma}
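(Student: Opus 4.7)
The plan is to reduce the problem to finding a single right rotation $G_R$ with the property that the first columns of $AG_R$ and $BG_R$ are parallel; once this is achieved, a standard left Givens rotation $G_L$ simultaneously annihilates the second entry of both, yielding an upper triangular pencil. Writing $A = \begin{bmatrix} a_{11} & 0 \\ a_{21} & a_{22} \end{bmatrix}$ and $B = \begin{bmatrix} b_{11} & 0 \\ b_{21} & b_{22} \end{bmatrix}$, the generalized eigenvalues of the pencil are $\lambda_i = a_{ii}/b_{ii}$ (with the convention $\lambda_i = \infty$ when $b_{ii}=0$), read directly off the diagonals.

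The key observation is that the first columns of $AG_R$ and $BG_R$ are parallel precisely when $G_R\vec e_1$ is a right generalized eigenvector of $(A,B)$: if $Av = \mu Bv$ for some $\mu \in \overline \C$ (with $\mu = \infty$ interpreted as $Bv=0$), then $AG_R\vec e_1$ and $BG_R\vec e_1$ are scalar multiples of one another. I would compute the right eigenvector for $\lambda_1$ by solving $(a_{11}B - b_{11}A)v = 0$; because the first row of $a_{11}B - b_{11}A$ vanishes identically, only the second row is constraining, and it yields $v \propto (b_{11}a_{22} - a_{11}b_{22},\ a_{11}b_{21} - b_{11}a_{21})^T$. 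Choose $c_R, s_R$ with $|c_R|^2 + |s_R|^2 = 1$ and $s_R/c_R = v_2/v_1$, so that the rotation $G_R$ of the form \eqref{eq:Givens} satisfies $G_R \vec e_1 = v/\|v\|$. Then pick $G_L$ as the Givens rotation eliminating the second component of $AG_R \vec e_1 = Av/\|v\|$; since $Av$ and $Bv$ are parallel, the same $G_L$ also annihilates the second component of $BG_R\vec e_1$, and hence $(\widehat A, \widehat B) := (G_L A G_R,\, G_L B G_R)$ is upper triangular.

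To verify the eigenvalue placement, note that $\widehat a_{11} = \vec e_1^T G_L A v/\|v\| = \lambda_1\, \vec e_1^T G_L B v/\|v\| = \lambda_1 \widehat b_{11}$, so the generalized eigenvalue at position $(1,1)$ equals $\lambda_1 = a_{11}/b_{11}$; the $(2,2)$ ratio then follows from the determinantal identities $\det \widehat A = a_{11}a_{22}$ and $\det \widehat B = b_{11}b_{22}$, using that Givens rotations have determinant $1$. The main obstacle will be cleanly handling the degenerate cases in which the formula above breaks down — when $\lambda_1 = \lambda_2$ (so the eigenvector formula may return zero), when an eigenvalue lies at $0$ or $\infty$ (a diagonal entry vanishes), or when $a_{11}=b_{11}=0$ so that the $(1,1)$ eigenvalue is not even uniquely defined. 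In each such subcase the two-rotation strategy still applies, but one substitutes an alternative eigenvector (for instance $v = (b_{22},-b_{21})^T$ when $b_{11}=0$, or $v = \vec e_1$ when the nominal formula collapses) and a short case-by-case check confirms that $(\widehat A, \widehat B)$ is upper triangular with the eigenvalues in their original diagonal order.
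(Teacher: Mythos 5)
Your argument is correct and is the dual of the paper's construction. The paper first fixes the \emph{left} rotation $G_L$ so that the bottom rows of $G_L A$ and $G_L B$ become parallel --- equivalently, $\vec e_2^T G_L$ is made a \emph{left} generalized eigenvector of $(A,B)$ --- and then a single column rotation $G_R$ annihilates that parallel pair simultaneously. You do the mirror image: choose $G_R$ so that $G_R\vec e_1$ is a \emph{right} generalized eigenvector (parallelizing the first columns of $AG_R$ and $BG_R$), then annihilate with one left rotation $G_L$. Both routes hinge on the same idea (parallelize, then annihilate) and both place exactly one eigenvalue at the intended diagonal position by construction, with the other one following automatically. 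A small advantage of your write-up is that the generalized-eigenvector framing makes the eigenvalue-placement claim transparent: the identity $\widehat{a}_{11}=\lambda_1\widehat{b}_{11}$ is immediate, and the determinantal argument using $\det G_L=\det G_R=1$ then pins down the $(2,2)$ ratio cleanly --- the paper leaves this final step implicit (and in fact relegates the claim that no post-hoc eigenvalue swap is needed to a footnote). As you note yourself, both proofs gloss over the degenerate subcases (coincident eigenvalues, an eigenvalue at $0$ or $\infty$, a singular pencil with $a_{11}=b_{11}=0$); your sketch of substituting an alternative eigenvector in those subcases is the right fix and does not affect the validity of the overall construction.
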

\begin{proof}
    Since $G_L$ and $G_R$ are invertible matrices, $(\widehat{A},\widehat{B})$ has the same generalized eigenvalues as $(A,B)$.
    Denote the entries of $(A,B)$ as
    \begin{equation*}
    \left(
        \begin{bmatrix}
            a_{11} & 0 \\
            a_{21} & a_{22}
        \end{bmatrix}, \begin{bmatrix}
            b_{11} & 0 \\
            b_{21} & b_{22}
        \end{bmatrix}\right).
    \end{equation*}
    We now take $G_L$ such that
    \begin{equation*}
        G_L \left(
        \begin{bmatrix}
            a_{11} & 0 \\
            a_{21} & a_{22}
        \end{bmatrix}, \begin{bmatrix}
            b_{11} & 0 \\
            b_{21} & b_{22}
        \end{bmatrix}\right) = \left(
        \begin{bmatrix}
            \times & \times \\
            \tilde{a}_{21} & \tilde{a}_{22}
        \end{bmatrix}, \begin{bmatrix}
            \times & \times \\
            \tilde{b}_{21} & \tilde{b}_{22},
        \end{bmatrix}\right),\; \; \text{and} \operatorname{rank}\left(\begin{bmatrix}
            \tilde{a}_{21} & \tilde{a}_{22} \\
            \tilde{b}_{21} & \tilde{b}_{22}
        \end{bmatrix}\right) \;=1.
    \end{equation*}
    It can be easily verified that the rotation $G_{1,2}$ defined by 
    \begin{equation*}
        G_{1,2} \begin{bmatrix}
            a_{22}b_{11}-a_{11}b_{22} \\
            a_{21}b_{22} - a_{21}b_{21}
        \end{bmatrix} = \begin{bmatrix}
            r \\ 0
        \end{bmatrix},
    \end{equation*}
    satisfies the condition on $G_L$. Finally, we let $G_R$ be the rotation such that
    \begin{equation*}
        \begin{bmatrix}
            \tilde{a}_{21} & \tilde{a}_{22} \\
            \tilde{b}_{21} & \tilde{b}_{22}
        \end{bmatrix} G_R = \begin{bmatrix}
            0 & \times \\
            0 & \times
        \end{bmatrix},
    \end{equation*}
    where the crosses $\times$ denote generic non-zero elements.
    Note that $G_L(A,B)G_R$ is in generalized Schur form. \\
    If necessary, the eigenvalues can be swapped with 2 extra rotations\footnote{This can be done with the \texttt{ordqz} routine in MATLAB. However, it can even be proven that this swapping is never necessary.}~\cite{Kressner2006}.
    
\end{proof}

The location of the eigenvalues must remain fixed because they will correspond to the poles in the pencil $(H,K)$, as will be shown in the proof of Theorem~\ref{thm:solveIEP}. The following result gives a constructive proof for the introduction of a new finite pole in a pencil $(H,K)$.
\begin{lemma}\label{lemma:poles}
Assume we are in the same setting as Problem~\ref{problem:IEP}. Let $(H,K) \in \mathbb{C}^n$ be a $2$-Hessenberg pencil that satisfies all conditions of Problem~\ref{problem:IEP} except for \ref{cond:3} (for $i = n-2$) and \ref{cond:4} (for $i = n$). Then there exists a matrix $G_R \in \mathbb{C}^{n \times n}$, which can be expressed as a product of rotations, such that the transformed pencil $(\widehat{H}, \widehat{K}) = (H,K)G_R$ fulfills all conditions.
\end{lemma}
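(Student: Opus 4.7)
The plan is to construct $G_R$ as a product of just two Givens rotations acting only on columns $n-2$, $n-1$, $n$ of the pencil, namely $G_R = G_{n-1,n}\,G_{n-2,n-1}$. Two observations justify restricting attention to these three columns. First, for $j \geq n-2$ the constraint $i > j+2$ is vacuous, so columns $n-2,n-1,n$ of a 2-Hessenberg matrix have no structurally forced zeros and can be freely recombined without breaking the 2-Hessenberg shape. Second, the vectors $\vec\phi_1,\ldots,\vec\phi_{n-1}$ produced by the recurrence \eqref{eq:reccrelation1} depend only on columns $1,\ldots,n-3$ of $(H,K)$ and on the base case from the QR factorization of $W$, so leaving those columns alone automatically preserves \ref{cond:3} for $i\leq n-3$ and \ref{cond:4} for $i\leq n-1$. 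Right multiplication also preserves \ref{cond:1} with the same $Q$, since $ZQK=QH$ implies $ZQ(KG_R)=Q(HG_R)$, and \ref{cond:2} does not involve the pencil at all. Only \ref{cond:3} for $i=n-2$ and \ref{cond:4} for $i=n$ remain, and both are properties of the new column $n-2$ alone.

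Next I would translate these two remaining conditions into linear equations on that new column. Writing $v_j$ for the $j$-th column of the shifted pencil $H-p_nK$, a direct parameter count shows that after applying $G_R$ the new $(n-2)$-th column of $\widehat H - p_n\widehat K$ takes the form $\alpha v_{n-2}+\beta v_{n-1}+\gamma v_n$, where $(\alpha,\beta,\gamma)\in\mathbb{C}^3$ satisfies $|\alpha|^2+|\beta|^2+|\gamma|^2=1$, and conversely every such unit-norm triple is attained by some choice of the two rotation angles. Condition \ref{cond:3} for $i=n-2$ becomes the scalar equation $\hat h_{n,n-2}-p_n\hat k_{n,n-2}=0$, i.e.\ the $n$-th coordinate of this combination vanishes. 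Condition \ref{cond:4} demands that the new pole $p_n$ appear only in component $\pi_n$ of $\vec\phi_n$; since the recurrence recovers $\vec\phi_n$ by dividing $\sum_{i=1}^{n-1}(\hat h_{i,n-2}-z\hat k_{i,n-2})\vec\phi_i(z)$ by a scalar multiple of $(z-p_n)$, this forces the second linear equation $\sum_{i=1}^{n-1}(\hat h_{i,n-2}-p_n\hat k_{i,n-2})\phi_{i,3-\pi_n}(p_n)=0$, whose coefficients are already-known evaluations of the previous basis functions at $p_n$.

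The existence step is then pure linear algebra: two homogeneous linear equations on $(\alpha,\beta,\gamma)\in\mathbb{C}^3$ cut out a subspace of dimension at least one, so a nonzero solution exists and can be rescaled to unit length. From any such triple the parameters of $G_{n-1,n}$ (first mixing $v_{n-1}$ and $v_n$ in the required $(\beta,\gamma)$-ratio) and of $G_{n-2,n-1}$ (subsequently combining the result with $v_{n-2}$ in the proportion determined by $\alpha$) can be read off by the same parameter identification used in the counting.

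The step I expect to be the most delicate is the non-degeneracy concealed inside \ref{cond:4}: the resulting $\vec\phi_n$ must actually lie in $\mathcal{R}_n\setminus\mathcal{R}_{n-1}$, not merely in $\mathcal{R}_{n-1}$. This would fail only if the $\pi_n$-component of the numerator were to vanish at $z=p_n$ as well, thereby cancelling the $(z-p_n)$-factor. Within the one-complex-parameter family of admissible triples, the extra scalar condition enforcing this additional vanishing cuts the family strictly, so unless the no-breakdown hypothesis $\vec w_i^H\vec\phi(z_i)\neq 0$ from Definition~\ref{def:innerprod} fails, at least one admissible $(\alpha,\beta,\gamma)$ remains. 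I would therefore close the proof by arguing that this degenerate locus is a proper affine subvariety of the solution family, and invoking the problem's standing no-breakdown assumption to produce a valid $G_R$.
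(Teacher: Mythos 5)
Your proposal is correct but takes a genuinely different route from the paper. The paper uses \emph{three} rotations $G_{n-1,n}, G_{n-2,n}, G_{n-2,n-1}$ with a decoupling argument: the first two rotations are chosen so that the ratio $p_n = \nu/\mu$ appears on \emph{both} the first and second subdiagonals (eq.~\eqref{eq:poleplacement}), and this redundancy guarantees that the third rotation $G_{n-2,n-1}$ (which enforces the zero of $\hat{\varphi}_{n-2,3-\pi_n}$ at $p_n$, hence condition~\ref{cond:4}) cannot disturb the already-established ratio at position $(n,n-2)$, since a linear combination of two columns both having ratio $p_n$ at row $n$ again has ratio $p_n$ there. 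You instead parametrize the new column $n-2$ of the shifted pencil directly as a unit-norm combination $\alpha v_{n-2} + \beta v_{n-1} + \gamma v_n$ realized by only \emph{two} rotations $G_{n-1,n}G_{n-2,n-1}$ (a surjective parametrization onto unit triples, which I verified via $(\alpha,\beta,\gamma)=(\bar c_2, s_2\bar c_1, s_2 s_1)$), and observe that conditions~\ref{cond:3} and~\ref{cond:4} become two homogeneous linear equations on $\mathbb{C}^3$, whose kernel is at least one-dimensional. This is more economical and makes the existence step a clean dimension count, at the cost of solving a $2\times3$ null-space problem instead of three explicit scalar eliminations; the paper's three-rotation scheme is perhaps better adapted to a direct numerical implementation where each rotation is determined by a single $2$-vector elimination. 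Your closing caveat about non-degeneracy (whether the $(z-p_n)$ factor could also cancel in the $\pi_n$ component, pushing $\vec\phi_n$ into $\mathcal{R}_{n-1}$) is a genuine subtlety the paper leaves implicit as well; your appeal to the no-breakdown assumption is reasonable, though note that when the two linear conditions are independent the admissible triple is unique up to scaling, so the ``proper subvariety'' phrasing really just amounts to assuming that unique candidate is non-degenerate --- which is exactly the standing no-breakdown hypothesis.
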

\begin{proof}
There are two conditions that still need to be satisfied:
\begin{equation*}
    p_n = \frac{h_{n,n-2}}{k_{n,n-2}}, \qquad \quad \vec \phi_n \in \mathcal{R}_n \setminus \mathcal{R}_{n-1}.
\end{equation*}
The above conditions boil down to adding the pole $p_n$ in only \emph{one} of the component functions. Assume that the pole is added in the first component function ($\pi_n=1$), the other case is proven similarly.
As will become clear later, we need the pole on both the first and second subdiagonal of $(H,K)$
\begin{equation}\label{eq:poleplacement}
    p_n = \frac{h_{n,n-1}}{k_{n,n-1}} = \frac{h_{n,n-2}}{k_{n,n-2}}.
\end{equation}
This can be done with two rotations $G_{n-1,n}$ and $G_{n-2,n}$ acting on the columns such that $(\widetilde{H}, \widetilde{K}) = (H,K)G_{n-1,n}G_{n-2,n}$ has the ratios equal to the pole $p_n$. For example, to enforce the pole $p_n = \nu/\mu$ on the first subdiagonal, we need that
\begin{equation*}
    p_n=\frac{\nu}{\mu} = \frac{\tilde{h}_{n,n-1}\overline{c} - s\tilde{h}_{n,n}}{\tilde{k}_{n,n-1}\overline{c} - s\tilde{k}_{n,n}},
\end{equation*}
where $c,s \in \C$ are the elements of $G_{n-1,n}$, see equation~\eqref{eq:Givens}.
Some calculations show that we can take $G_{n-1,n}$ such that
\begin{equation*}
    G_{n-1,n}\begin{bmatrix}
        \mu\tilde{h}_{n,n-1}-\nu\tilde{k}_{n,n-1} \\
        -\mu\tilde{h}_{n,n}+\nu\tilde{k}_{n,n}
    \end{bmatrix} = \begin{bmatrix}
        \times \\
        0
    \end{bmatrix}.
\end{equation*}
The $(n-2)$th column of $(\widetilde{H}, \widetilde{K})$ yields the recursion relation
\begin{align*}
\begin{split}
    \left( z\tilde{k}_{n,n-2} - \tilde{h}_{n,n-2}\right)\vec \phi_{n}(z) &= -\sum_{i=1}^{n-1}\left( z\tilde{k}_{i,n-2}- \tilde{h}_{i,n-2}\right) \vec \phi_i(z) \\
    &= \begin{bmatrix}
        \tilde{\varphi}_{n-2,1}(z) \\
        \tilde{\varphi}_{n-2,2}(z)
    \end{bmatrix}.
    \end{split}
\end{align*}
Note that the pole $p_n = \tilde{h}_{n,n-2}/\tilde{k}_{n,n-2}$ gets added in both component functions of $\vec \phi_n(z)$. However, this pole in the second component function can be removed by choosing the recursion coefficients such that a zero at $p_n$ is added in $\varphi_{n-2,2}(z)$. This can be done by using a third and final rotation $G_{n-2,n-1}$ acting on the columns such that the pencil $(\widehat{H},\widehat{K}) = (\widetilde{H},\widetilde{K})G_{n-2,n-1}$ solves Problem~\ref{problem:IEP}.
Consider the rational vector
\begin{equation*}\label{eq:rationalvector}
    \begin{bmatrix}
        \tilde{\varphi}_{n-1,1}(z) \\
        \tilde{\varphi}_{n-1,2}(z)
    \end{bmatrix} = -\sum_{i=1}^{n-1}\left( z\tilde{k}_{i,n-1}- \tilde{h}_{i,n-1}\right) \vec \phi_i(z).
\end{equation*}
Choosing $G_{n-2,n-1}$ such that
\begin{equation}\label{eq:finalrotation}
    \begin{bmatrix}
        \tilde{\varphi}_{n-2,2}(p_k)&  \tilde{\varphi}_{n-1,2}(p_k)
     \end{bmatrix} G_{n-2,n-1} = \begin{bmatrix}
         0 & \times
     \end{bmatrix}.
\end{equation}
takes a linear combination of $\tilde{\varphi}_{n-2,1}$ and $\tilde{\varphi}_{n-2,2}$ such that the new function $\hat{\varphi}_{n-2,2}(z)$ has a zero in $p_n$. It is important to note that, after this final rotation, we obtain $p_n = \hat{h}_{n,n-2}/\hat{k}_{n,n-2}$, precisely because the pole was placed in the first and second subdiagonals during the previous steps (see equation~\eqref{eq:poleplacement}).
\end{proof}
In our updating method, we apply the above lemma for each pole $p_k$. We need the evaluation of rational functions $\varphi_{i,j}(z)$ in the pole $p_k$ as illustrated by equations~\eqref{eq:rationalvector} and~\eqref{eq:finalrotation}. For a finite pole $p_k$, this evaluation can be performed by applying the recurrence relation, requiring $\mathcal{O}(n^2)$ operations. More concretely, evaluation of $\vec \phi_j(z)$ in $z \in \C$ is done by solving the following linear system
\begin{equation}\label{eq:linearsystem}
    \begin{bmatrix}
        \vec \phi_1(z) & \vec \phi_2(z) & \ldots & \vec \phi_n(z) 
    \end{bmatrix}
    \begin{bNiceArray}{ccccc}[margin=3pt]
          \Block[borders={bottom,right}]{1-1}{ I_2}& \Block[borders={left}]{3-4}{ z\widehat{K}-\widehat{H}}& & &\\
           & &  & &\\
           && & &
        \end{bNiceArray} = \begin{bmatrix}
            \vec \phi_1(z) & \vec \phi_2(z) & 0 \ldots 0
        \end{bmatrix},
\end{equation}
where $\widehat{K},\widehat{H} \in \C^{n,n-2}$ are equal to $K$ and $H$ but with the two last columns removed.
When $p_k$ is an infinite pole, evaluation is equivalent to computing the highest degree coefficient of the numerator polynomial of the component functions $\varphi_{j,i}(z)$. The orthonormal rational function vectors have poles at infinity at the first and second positions. For a second pole at infinity, the highest degree coefficients are computed using the recurrence relation. If the pole at infinity has a multiplicity greater than two, Chebfun~\cite{Trefethen2014} is used to compute the highest degree coefficient, though this method is computationally expensive. Using the above lemma's, we now give a constructive proof of solving Problem~\ref{problem:IEP} inductively.   
\begin{theorem}\label{thm:solveIEP}    
Let $Z_k \in \C^{k\times k}$, weight matrix $W_k \in C^{k\times 2}$, poles $\begin{bmatrix}
    p_{i}
\end{bmatrix}_{i=1}^k$ and indexing vector $\vec \pi$, define an IEP as given in Problem \ref{problem:IEP} and let $\{(H,K),Q\}$ be the solution. Assume we add an additional node $z_{k+1}$, weight $\vec w_{k+1}$, pole $p_{k+1}$ and index $\pi_{k+1}$ to define an IEP of dimension $k+1$. Then, this IEP has a solution $\{(\widehat{H}, \widehat{K}), \widehat{Q}\}$ which can be written as:
    \begin{align*}
        (\widehat{H}, \widehat{K}) &= Q_L\left(\begin{bmatrix}
        H&  \\
        &z_{k+1}
        \end{bmatrix},\begin{bmatrix}
        K&  \\
        &1
        \end{bmatrix}\right)Q_R \\
        \widehat{Q} &= Q_L \begin{bmatrix}
            Q & \\
            & 1
        \end{bmatrix},
    \end{align*}
    where $Q_L,Q_R$ are matrices composed of rotations.
\end{theorem}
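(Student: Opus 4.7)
My plan is to carry out the inductive step in three stages, exploiting the two preceding lemmas. First I would form the block-diagonal extensions
\[
\tilde Q = \begin{bmatrix} Q & \\ & 1 \end{bmatrix}, \qquad \tilde H = \begin{bmatrix} H & \\ & z_{k+1} \end{bmatrix}, \qquad \tilde K = \begin{bmatrix} K & \\ & 1 \end{bmatrix}.
\]
A direct block multiplication gives $Z_{k+1}\tilde Q \tilde K = \tilde Q \tilde H$, so \ref{cond:1} is immediate; the pencil is trivially $2$-Hessenberg, and the pole conditions \ref{cond:3} and span conditions \ref{cond:4} for indices $\le k-2$ are inherited from the original solution. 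What still needs to be repaired is the weight factorization \ref{cond:2}, the pole condition \ref{cond:3} for $i=k-1$, and the span condition \ref{cond:4} for $i=k+1$ associated with the new triple $(\vec w_{k+1}, p_{k+1}, \pi_{k+1})$.

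In the second stage I would restore \ref{cond:2} by a rotation-chasing procedure. The matrix $\tilde Q^H W_{k+1}$ has the old triangular factor $R_k$ in its first two rows, zeros in rows $3,\dots,k$, and $\vec w_{k+1}^T$ in row $k+1$. Two plane rotations $G_{2,k+1}$ then $G_{1,k+1}$, acting on these rows, can zero out the two entries of $\vec w_{k+1}^T$ against $r_{22}$ and $r_{11}$, producing the shape required for an economic QR factorization. To maintain \ref{cond:1}, these row-side rotations on $\tilde Q$ must be matched by left rotations on the pencil $(\tilde H, \tilde K)$, and those pencil rotations introduce fill-in ``bulges'' below the second subdiagonal. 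I would clear these bulges by a sequence of column rotations that sweep them toward the top-left corner; each column rotation eliminates one bulge but may push a new one up and to the right, and the chase terminates once the last bulge falls off the boundary, at which point the $2$-Hessenberg form is restored. This is the direct $2$-Hessenberg analogue of the chase used by Van Buggenhout et al.\ \cite{VanBuVanBaVand2022} in the $1$-Hessenberg case, and Lemma~\ref{lemma:swapzeros} would be invoked on the trailing $2{\times}2$ blocks encountered during the chase so that the generalized eigenvalues representing already-fixed poles stay on the intended diagonal positions.

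In the third stage, after the chase the pencil is again $2$-Hessenberg and satisfies \ref{cond:1} and \ref{cond:2}, but the ratio $\hat h_{k+1,k-1}/\hat k_{k+1,k-1}$ need not equal $p_{k+1}$ and $\vec \phi_{k+1}$ need not place its singularity in the component prescribed by $\pi_{k+1}$. This is precisely the hypothesis of Lemma~\ref{lemma:poles} with $n=k+1$, and applying it yields three further column rotations that simultaneously enforce \ref{cond:3} for $i=k-1$ and \ref{cond:4} for $i=k+1$. Collecting all column rotations from Stages~2 and~3 gives $Q_R$, and collecting the matching row-side rotations on the pencil into $Q_L$ gives the factorization claimed in the theorem.

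The principal obstacle I foresee is the bulge-chasing in Stage~2. Because each row rotation on a $2$-Hessenberg pencil creates fill in two subdiagonal positions at once, the chase has a branching pattern that must be executed in a carefully chosen order to avoid reintroducing bulges already cleared, and making the termination argument airtight is the genuinely new work compared with the $1$-Hessenberg update in \cite{VanBuVanBaVand2022}. The other two stages then follow cleanly from the lemmas already in hand.
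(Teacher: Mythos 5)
Your overall plan --- block-diagonal extension, rotations to restore \ref{cond:2}, Lemma~\ref{lemma:swapzeros} to restore the $2$-Hessenberg form, then Lemma~\ref{lemma:poles} --- matches the paper's proof in outline and correctly identifies both auxiliary lemmas as the engines of the argument. The gap is in your stage~2, where the geometric picture and the argument built on it do not match what actually happens. First a small point: the rotations must be applied as $G_{1,k+1}$ then $G_{2,k+1}$, otherwise the second one reintroduces a nonzero at $(k+1,2)$ via $r_{12}$. More substantively, after the Hermitians of those two row rotations hit the block-extended pencil (mixing rows $1$, $2$, and $k+1$), the result is not a localized bulge sitting just below the band: the \emph{entire last row} of $(\widetilde H, \widetilde K)$ becomes dense, and the out-of-band entries are exactly $(k+1,1),\ldots,(k+1,k-2)$. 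The paper removes them in a single left-to-right sweep of $k-2$ applications of Lemma~\ref{lemma:swapzeros}; the $i$th one acts on the $2\times 2$ sub-pencil at rows $\{i+2,\, k+1\}$ and columns $\{i,\, k+1\}$, which is lower triangular because none of the earlier rotations touch row $i+2$, and converts it to upper triangular, annihilating $(k+1,i)$ in place while keeping the already-fixed poles at their diagonal positions. No new sub-band fill appears and nothing is pushed ``up and to the right,'' so there is no branching chase and no termination argument to establish. The obstacle you single out at the end --- and leave unresolved --- is therefore an artifact of the wrong fill-in picture; once the fill pattern is seen correctly, stage~2 follows as directly from Lemma~\ref{lemma:swapzeros} as your stage~3 does from Lemma~\ref{lemma:poles}.
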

\begin{proof}
We give a proof by construction.
   First, a new node is added in the following way
    \begin{equation*}
        \begin{bNiceArray}{ccc}[margin]
          \Block[borders={bottom,right}]{2-2}{Z}& & \\
           & & \\
           && z_{k+1}
        \end{bNiceArray}\begin{bNiceArray}{ccc}[margin]
          \Block[borders={bottom,right}]{2-2}{Q}& & \\
           & & \\
           && 1
        \end{bNiceArray}\begin{bNiceArray}{ccc}[margin]
          \Block[borders={bottom,right}]{2-2}{K}& & \\
           & & \\
           && 1
        \end{bNiceArray}=\begin{bNiceArray}{ccc}[margin]
          \Block[borders={bottom,right}]{2-2}{Q}& & \\
           & & \\
           && 1
        \end{bNiceArray}\begin{bNiceArray}{ccc}[margin]
          \Block[borders={bottom,right}]{2-2}{H}& & \\
           & & \\
           && z_{k+1}
        \end{bNiceArray}.
    \end{equation*}
    Condition~\ref{cond:2} for the IEP of size $k$ gives
    \begin{equation*}
         \begin{bNiceArray}{ccc}[margin]
          \Block[borders={bottom,right}]{2-2}{Q^H}& & \\
           & & \\
           && 1
        \end{bNiceArray}W_{k+1} =
        \begin{bNiceArray}{cc}[margin]
            \times & \times \\
            0 & \times \\
            0 & 0 \\
            \vdots & \vdots \\
            \hline 
            \Block{1-2}{\rule{0pt}{10pt}\,\vec w^T_{k+1}}
        \end{bNiceArray}.
    \end{equation*}
    Now apply two rotations $G_{1,k+1}$, $G_{2,k+1}$ such that the last row is eliminated,
\begin{equation*}
    \underbrace{
        G_{2,k+1}G_{1,k+1}
        \begin{bNiceArray}{ccc}[margin]
            \Block[borders={bottom,right}]{2-2}{Q^H} & & \\
             & & \\
             & & 1
        \end{bNiceArray}
    }_{\mathclap{\coloneqq \widetilde{Q}^H}} 
    W_{k+1} = 
    \begin{bNiceArray}{cc}[margin]
        \times & \times \\
        0 & \times \\
        0 & 0 \\
        \vdots & \vdots \\
        0 & 0
    \end{bNiceArray}
\end{equation*}
Note that $\widetilde{Q}$ makes condition \ref{cond:2} automatically fulfilled for the IEP of size $k+1$. Furthermore, by defining
\begin{equation*}
    (\widetilde{H}, \widetilde{K}) = G^H_{1,k+1}G^H_{2,k+1}\left(\begin{bNiceArray}{ccc}[margin]
          \Block[borders={bottom,right}]{2-2}{H}& & \\
           & & \\
           && z_{k+1}
        \end{bNiceArray}, \begin{bNiceArray}{ccc}[margin]
          \Block[borders={bottom,right}]{2-2}{K}& & \\
           & & \\
           && 1
        \end{bNiceArray} \right),
\end{equation*}
the relation $Z_{k+1}\widetilde{Q}\widetilde{K} = Z_{k+1}\widetilde{H}$ also remains valid. However, the pencil $(\widetilde{H}, \widetilde{K})$ does not yet have the required 2-Hessenberg structure with pole $p_{k+1}$. For example, for $k=5$, by looking at the rows on which the rotations $G_{1,k+1}^H$ and $G_{2,k+1}^H$ act, we obtain a pencil having the following structure 
\begin{equation*}
\left(\widetilde{H}, \widetilde{K}\right) =
    \begin{bNiceMatrix}[nullify-dots,margin]
    \CodeBefore
    \cellcolor{red!15}{6-1,3-1,3-6,6-6}
    \Body
        \times & \times & \times & \times & \times & \times\\
        \times & \times & \times & \times & \times & \times\\
        \times &\times & \times & \times &   \times & \\
          & \times & \times & \times & \times &   \\
          & & \times & \times & \times &  \\
          \color{red}{\times} & \color{red}{\times} & \color{red}{\times} & \times & \times & \times\\
    \end{bNiceMatrix}, \begin{bNiceMatrix}[nullify-dots,margin]
    \CodeBefore
    \cellcolor{red!15}{6-1,3-1,3-6,6-6}
    \Body
        \times & \times & \times & \times & \times & \times\\
        \times & \times & \times & \times & \times & \times\\
        \times &\times & \times & \times &   \times & \\
          & \times & \times & \times & \times &   \\
          & & \times & \times & \times &  \\
          \color{red}{\times} & \color{red}{\times} & \color{red}{\times} & \times & \times & \times\\
    \end{bNiceMatrix}.
\end{equation*}
The pencil is 2-Hessenberg if the elements at positions $(6,1), (6,2)$ and $(6,3)$ can be removed.
This is possible by repeatedly using Lemma \ref{lemma:swapzeros}.
Consider the $2 {\times}2$ submatrices extracted from \((\widehat{H}_{k+1}, \widehat{K}_{k+1})\), formed by the intersections of the third and last rows with the first and last columns, as indicated by the highlighted entries. 
Inserting this pencil of $2{\times}2$-matrices in Lemma~\ref{lemma:swapzeros} gives two rotations $G_{L,1} = G_{3,6}$ and $G_{R,1} = G_{1,6}$ such that $G_{L,1}(\widetilde{H}, \widetilde{K})G_{R,1}$ has zeros in the lower left corners. Successive application of Lemma~\ref{lemma:swapzeros} for the unwanted elements at positions $(6,2)$ and $(6,3)$ generates the rotations $\{G_{L,2}, G_{R,2}\}$ and $\{G_{L,3}, G_{R,3}\}$, respectively.  In general, we apply Lemma \ref{lemma:swapzeros} $(k-2)$ times to obtain the desired pencil structure. These sequences of rotations $G_{L} = \prod_{i=1}^{k-2}G_{L,i}$ and $G_{R} = \prod_{i=1}^{k-2}G_{R,i}$, $i=1,\ldots,k-2$, are inserted, so that condition \ref{cond:1} remains valid,
\begin{equation*}
    Z_{k+1} 
    \underbrace{\left(\widetilde{Q}G^H_L\right)}_{\coloneqq \overline{Q}}
    \underbrace{G_L\widetilde{K}G_R}_{\coloneqq \overline{K}}
    = 
    \left(\widetilde{Q}G_L^H\right)
    \underbrace{G_L\widetilde{H}G_R}_{\coloneqq \overline{H}},
\end{equation*}
where $(\overline{H},\overline{K})$ is a 2-Hessenberg pencil. Note that $(\overline{H},\overline{K})$ satisfies the conditions in Lemma~\ref{lemma:poles}.   
The final step is adding the pole to the first or second component function which is exactly the procedure described in the proof of Lemma~\ref{lemma:poles}. This lemma gives a rotation $G_r$ such that the pencil $(\widehat{H},\widehat{K})=(\overline{H},\overline{K})G_r$ solves the IEP of dimension $k+1$. 
\end{proof}

It is interesting to note that, at each step, $K_k$ is a unitary 2-Hessenberg matrix. The unitarity property allows for a data-sparse representation through a structured factorization of rotations. Mach, Van Barel, and Vandebril~\cite{MachVanBaVand2014} demonstrated, for the (non-vector) orthogonal rational functions, how to solve the IEP when the recurrence matrix is stored as a product of rotations. These rotations are manipulated—using operations such as \emph{turnover}, \emph{fusion}, and \emph{passing through} --- to maintain the factorized form throughout the algorithm. A similar approach could certainly be developed for rational function vectors; however, in this paper, we restrict ourselves to the non-factorized form.

\section{Rational vector Krylov subspaces}\label{sec:krylov}
This section shows how to solve Problem \ref{problem:general} with a Krylov-type algorithm by exploiting the connection with rational vector Krylov spaces. This can be seen as a vector generalization of the classical rational Arnoldi algorithm, see, e.g.,~\cite{Guttel2010}. Recall that for a matrix $A\in \C^{n\times n}$ and vector $v\in \C^n$, the standard Krylov space is defined as $\mathcal{K}_m(A,\vec v)=\operatorname{span}\{\vec v,A\vec v,\ldots,A^{m-1}\vec v\}$. A rational Krylov subspace can be seen as a standard Krylov space multiplied by some rational function evaluated in the matrix $A$.

\begin{definition}{(Rational Krylov Subspace)}
Consider a matrix $A\in \C^{n\times n}$, a vector $\vec v \in \C^n$, poles $\vec p_{m} = \begin{bmatrix}
    p_i
\end{bmatrix}_{i=1}^{m-1}$ disjoint from $A$'s spectrum, the rational Krylov subspace of order $m$ is defined as
\begin{equation*}
    Q_m(A,\vec v,\vec p_m) = \frac{1}{\prod_{i=1,p_i\neq \infty}^{m-1} (A-p_iI)}\mathcal{K}_m(A,\vec v).
\end{equation*}
\end{definition}
A rational vector Krylov subspace can be seen as the sum of two rational Krylov subspaces in the following sense.
\begin{definition}{(Rational vector Krylov Subspace)}
    Consider a matrix $A\in \C^{n\times n}$, two vectors $\vec v_1, \vec v_2 \in \C^n$, poles $\vec p_{m} = \begin{bmatrix}
        p_i
    \end{bmatrix}_{i=1}^{m}$ disjoint from $A$'s spectrum and indexing vector $\vec \pi_{m} \in \{1,2\}^{m}$. Let vectors $\vec p_{k,1}$ and $\vec p_{k,2}$ contain all $p_i$'s, $i = 1,\ldots,k$ for which $\pi_k=1$ and $\pi_k=2$, respectively. The rational vector Krylov subspace is defined as
    \begin{equation*}
        \mathcal{\vec Q}_m(A,[
            \vec v_1,\vec v_2],\vec p_{m}) = \mathcal{Q}_{d_{m,1}}(A,\vec v_1, \vec p_{m,1})+\mathcal{Q}_{d_{m,2}}(A,\vec v_2, \vec p_{m,2}),
    \end{equation*}
    where $d_{m,1}$ and $d_{m,2}$ are the number of elements of the vectors $\vec p_{m,1}$ and $\vec p_{m,2}$, respectively.
\end{definition}
To simplify notation, the dependence on the indexing vector $\vec \pi_m$ is omitted. Note that these rational vector Krylov subspaces closely resemble block Krylov spaces~\cite{Lund18}. Consider a matrix $A\in \C^{n\times n}$ and block vector $\vec B = \begin{bmatrix}
    \vec b_1 \; \ldots\; \vec b_s
\end{bmatrix}\in\C^{n\times s}$ of size $s$, then the \emph{classical} block Krylov space is defined as 
\begin{equation*}
    \mathcal{K}_m^{\text{Cl}}(A,\vec B) = \operatorname{blockspan}\{\vec B,A\vec B,\ldots, A^{m-1}\vec B\} = \left\{\sum_{i=0}^{m-1}A^i\vec BC_i:C_i\in \C^{s\times s} \right\},
\end{equation*}
and thus every vector $v\in \mathcal{K}_m^{\text{Cl}}(A,\vec B)$ belongs to $\mathcal{K}_m(A,\vec b_1)+\ldots + \mathcal{K}_m(A,\vec b_s)$. Similarly, $\mathcal{\vec Q}_m(A,\begin{bmatrix}
            \vec v_1,\vec v_2
        \end{bmatrix},\vec p_{m})$ is defined as a sum of two rational Krylov spaces; however, the individual spaces may have different dimensions, depending on $\vec p_m$ and $\vec \pi_m$.
In order to have a relation with the rational function vectors in Definition~\ref{def:rationalvectors}, we further assume that $p_1=p_2=\infty$, $\pi_1=1$ and $\pi_2=2$ implying that
\begin{align*}
    \mathcal{\vec Q}_1(A,[
            \vec v_1,\vec v_2],\vec p_{m}) &= \mathcal{Q}_{1}(A,\vec v_1, \vec p_{m,1}) = \operatorname{span}\{\vec v_1\}, \\
    \mathcal{\vec Q}_2(A,[
            \vec v_1,\vec v_2],\vec p_{m}) &= \mathcal{Q}_{1}(A,\vec v_1, \vec p_{m,1}) +  \mathcal{Q}_{2}(A,\vec v_2, \vec p_{m,2}) = \operatorname{span}\{\vec v_1,\vec v_2\}.
\end{align*}
This relation is made explicit by noting that every vector $\vec v_k \in \mathcal{\vec Q}_m(A,\begin{bmatrix}
            \vec v_1,\vec v_2
        \end{bmatrix},\vec p_{m})$ can be written as \begin{equation} \label{eq:isometry}
    v_k = \vec \phi^T(A)\begin{bmatrix}
    \vec v_1 \\
    \vec v_2
\end{bmatrix}, \; \; \vec \phi(z) \in \mathcal{R}_{m},
\end{equation}
where $\mathcal{R}_m$ is the space of rational function vectors defined by poles $\vec p_m$ and indexing vector $\vec \pi_m$.
Let $\langle \cdot, \cdot\rangle$ denote the standard Euclidean inner product on $\mathcal{\vec Q}_m(A,[
            \vec v_1,\vec v_2],\vec p_{m})$, then there is an isometry between the space of rational function vectors $\mathcal{R}_n$ and a rational vector Krylov subspace. Recall that the rows of the weight matrix $W\in \C^{n\times 2}$ are denoted by $\vec w^T_i$ (cf.\ Section \ref{sec:problemformulation}). In the following proposition, we denote the columns of this weight matrix with $\vec \omega_i$.
\begin{proposition}\label{prop:isometry}
    Let $\langle\cdot,\cdot\rangle_{\mathcal{R}_n}$ be the inner product on $\mathcal{R}_{n}$, defined by nodes $\{z_{i}\}_{i=1}^{n}$ and weight matrix $W = [
        \vec \omega_1 ,\vec \omega_2] \in \C^{n \times {2}}$ (see Definition~\ref{def:innerprod}). There is an isometry between $\mathcal{R}_{n}$ and $\mathcal{\vec Q}_m(Z,[
            \vec \omega_1,\vec \omega_2],\vec p_{m})$ where $Z=\operatorname{diag}\left(z_1, \ldots, z_{n}\right)$.
\end{proposition}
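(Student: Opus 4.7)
The plan is to exhibit an explicit linear map $T\colon \mathcal{R}_n \to \C^n$, verify that it lands in the rational vector Krylov subspace, show that it preserves inner products, and finally use a dimension argument to conclude bijectivity. Concretely, I would define
\[
    T(\vec \phi) \;=\; \phi_1(Z)\,\overline{\vec \omega_1} \,+\, \phi_2(Z)\,\overline{\vec \omega_2},
\]
whose $i$-th entry equals $\vec w_i^{H}\vec\phi(z_i)$; this is the same assignment used to build the matrix $Q$ in Section~\ref{sec:problemformulation}. Linearity is clear since $\phi_j(Z)$ is diagonal and depends linearly on $\phi_j$.

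The first substantive step is to show $\operatorname{Im}(T) \subseteq \mathcal{\vec Q}_n(Z,[\overline{\vec\omega_1},\overline{\vec\omega_2}],\vec p_n)$ (the conjugation of the starting vectors being a harmless convention that can be absorbed into the definition of the Krylov space). By Definition~\ref{def:rationalvectors}, the component $\phi_j$ is a rational function whose finite poles lie among those indexed by $\vec p_{n,j}$ and whose polynomial degree is governed by the number of occurrences of $\infty$ among these indices. Consequently $\phi_j(Z)\,\overline{\vec\omega_j}$ lies in the scalar rational Krylov subspace $\mathcal{Q}_{d_{n,j}}(Z,\overline{\vec\omega_j},\vec p_{n,j})$, and summing over $j\in\{1,2\}$ places $T(\vec\phi)$ in $\mathcal{\vec Q}_n$ by definition.

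Next I would establish the isometry property by a direct computation. Writing $a_i = \vec w_i^H\vec\phi(z_i)$ and $b_i = \vec w_i^H\vec\psi(z_i)$, the defining sum
\[
    \langle \vec\phi,\vec\psi\rangle_{\mathcal{R}_n} \;=\; \sum_{i=1}^{n} \vec\psi(z_i)^H\,\vec w_i\,\vec w_i^H\,\vec\phi(z_i)
\]
factorises summand by summand as $\overline{b_i}\,a_i$, which is precisely the standard Hermitian inner product of $T(\vec\phi)$ and $T(\vec\psi)$ on $\C^n$. Since the pseudo-inner product on $\mathcal{R}_n$ is assumed to be positive definite (the standing hypothesis $\vec w_i^H\vec\phi(z_i)\neq 0$ on basis elements), $T$ is automatically injective.

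For surjectivity I would invoke a dimension count: $\dim \mathcal{R}_n = n$ by construction, so injectivity gives $\dim T(\mathcal{R}_n) = n$, and since $T(\mathcal{R}_n) \subseteq \mathcal{\vec Q}_n \subseteq \C^n$ we conclude that $T(\mathcal{R}_n) = \mathcal{\vec Q}_n$, with the by-product that the rational vector Krylov subspace itself has dimension $n$. The main obstacle I anticipate is the inclusion step: one must carefully track how the indexing vector $\vec\pi_n$ partitions poles between the two component-wise scalar Krylov subspaces, and verify that the polynomial pieces arising when $p_k=\infty$ fit within the prescribed degree $\ell$ from Definition~\ref{def:rationalvectors}. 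Once this bookkeeping is settled, everything else reduces to routine linear algebra.
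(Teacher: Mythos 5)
Your proposal is correct and follows essentially the same approach as the paper: define the evaluation map $\vec\phi \mapsto (\vec w_i^H\vec\phi(z_i))_{i=1}^n$ and verify that the discrete inner product on $\mathcal{R}_n$ pulls back to the Euclidean inner product by a summand-by-summand computation. You are somewhat more careful on two points the paper leaves implicit --- using $\overline{\vec\omega_j}$ rather than $\vec\omega_j$ as the Krylov starting vectors, which is the convention consistent with the $\vec w_i^H$ appearing in Definition~\ref{def:innerprod}, and supplying a dimension count for surjectivity --- but the core argument is the same.
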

\begin{proof}
    Let $\vec v_1,\vec v_2 \in \mathcal{\vec Q}_m(Z,[
            \vec \omega_1,\vec \omega_2],\vec p_{m})$, then there exist $\vec \phi(z), \vec \psi(z) \in \mathcal{R}_{n}$ such that $\vec v_1 = \vec \phi^T(Z)\begin{bmatrix}
        \vec \omega_1 \\
        \vec \omega_2
    \end{bmatrix}$ and $\vec v_2 = \vec \psi^T(Z)\begin{bmatrix}
        \vec \omega_1 \\
        \vec \omega_2
    \end{bmatrix}$. Taking the inner product gives
    \begin{equation*}
        \langle \vec v_1, \vec v_2\rangle = \begin{bmatrix}
            \overline{\vec \omega_1^T} & \overline{\vec \omega_2^T}
        \end{bmatrix} \overline{\vec \psi}(Z) \vec \phi^T(Z)\begin{bmatrix}
            \vec \omega_1 \\
            \vec \omega_2
        \end{bmatrix} = \sum_{i=1}^{n}\overline{\vec w_i\vec \psi(z_i)}\vec w_i \vec \phi(z_i) = \langle \vec \phi(z), \vec \psi(z)\rangle_{\mathcal{R}_n}.
    \end{equation*}
\end{proof}
\subsection{Rational vector Arnoldi algorithm}
Proposition~\ref{prop:isometry} indicates that we can solve problem \ref{problem:general}, as defined for $\mathcal{R}_n$, by finding an orthonormal basis for the isometric space $\mathcal{\vec Q}_m(Z,[
            \vec \omega_1,\vec \omega_2],\vec p_{m})$. In this section, we explain how an Arnoldi-like algorithm constructs such an orthonormal basis for $\mathcal{\vec Q}_m(A,[
            \vec v_1,\vec v_2],\vec p_{m})$ with $A\in \C^{n\times n}$, $\vec v_1,\vec v_2\in\C^n$ and $\vec p_m = \begin{bmatrix}
            p_i = \frac{\nu_{i}}{\mu_i}
        \end{bmatrix}_{i=1}^{m}$.
An orthonormal basis for $\mathcal{\vec Q}_{k+1}(A,[
            \vec v_1,\vec v_2],\vec p_{m})$ can inductively be constructed given an orthonormal basis $\{\vec q_1, \ldots, \vec q_k\}$ of $\mathcal{\vec Q}_k(A,[
            \vec v_1,\vec v_2],\vec p_{m})$. Consider the vector
\begin{equation}\label{eq:orthvector}
    \widehat{\vec q}_{k+1} = \left(\mu_{k+1}A-\nu_{k+1}I\right)^{-1}\sum_{i=1}^k\left(\rho_{i,k}A - \eta_{i,k}I\right)\vec q_i \coloneqq \left(\mu_{k+1}A-\nu_{k+1}I\right)^{-1}\vec r_{k}.
\end{equation}
The \emph{continuation parameters} $\{\rho_{i,k},\eta_{i,k}\}$ should be carefully chosen such that the vector $\widehat{\vec q}_{k+1}$ expands the preceding Krylov space, i.e., $\widehat{\vec q}_{k+1} \in \mathcal{\vec Q}_{k+1}(A,[
            \vec v_1,\vec v_2],\vec p_{m})\setminus \mathcal{\vec Q}_m(A,[
            \vec v_1,\vec v_2],\vec p_{m})$.
The new vector $\vec q_{k+1}$ is then obtained by orthonormalizing
\begin{equation}\label{eq:orthonormalizing}
    h_{k+1,k}\vec q_{k+1} = \widehat{\vec q}_{k+1} - \sum_{i=1}^k h_{i,k} \vec q_i,
\end{equation}
against the already known orthonormal vectors $\vec q_1, \ldots, \vec q_k$.
Combining equations~\eqref{eq:orthvector} and \eqref{eq:orthonormalizing} for $k=2,\ldots,\ell$ yields
\begin{align*}
\begin{split}
    A\begin{bmatrix}
        \vec q_1, \ldots, \vec q_{l+1}
    \end{bmatrix} K^{\ell} &= \begin{bmatrix}
        \vec q_1, \ldots,\vec q_{l+1}
    \end{bmatrix} H^{\ell}, \\ 
    K^{\ell} &= \begin{bmatrix}
        \mu_3h_{12} - \rho_{11} &\cdots & \mu_{\ell+1}h_{1\ell} - \rho_{1\ell}\\
        \mu_{3}h_{22} - \rho_{21} & \cdots & \mu_{\ell+1}h_{2\ell} - \rho_{2\ell}\\ 
        \mu_{3}h_{21} & \ddots & \vdots \\
                      & \ddots  & \mu_{\ell+1}h_{\ell\ell} - \rho_{\ell\ell}\\
                      & & \mu_{\ell+1}h_{\ell+1,\ell}
    \end{bmatrix} \in \C^{(\ell+1)\times(\ell-1)},\\
    H^{\ell} &= \begin{bmatrix}
        \nu_3h_{12} - \eta_{11} &\cdots & \nu_{\ell+1}h_{1\ell} - \eta_{1\ell}\\
        \nu_{3}h_{22} - \eta_{21} & \cdots & \nu_{\ell+1}h_{2\ell} - \eta_{2\ell}\\ 
        \nu_{3}h_{21} & \ddots & \vdots \\
                      & \ddots  & \nu_{\ell+1}h_{\ell\ell} - \eta_{\ell\ell}\\
                      & & \nu_{\ell+1}h_{\ell+1,\ell}
    \end{bmatrix} \in \C^{(\ell+1)\times(\ell-1)}.
\end{split}
\end{align*}
Since the Krylov space can contain at most $n$ orthonormal basis vectors, $h_{n+1,n},h_{n+2,n+1}$ and $h_{n+1,n+1}$ are zero by construction. Let $K_n, H_n \in \C^{n\times n}$ denote the matrices obtained by removing the last two rows of $K^{n+1}$ and $H^{n+1}$, respectively. We obtain \begin{equation*}
    A\begin{bmatrix}
        \vec q_1, \ldots, \vec q_n
    \end{bmatrix} K_n = \begin{bmatrix}
        \vec q_1, \ldots, \vec q_n
    \end{bmatrix} H_n,
\end{equation*} where $(H_n,K_n)$ is a 2-Hessenberg pencil. 

We have not yet specified how to choose the continuation parameters $\rho_{i,k}$ and $\eta_{i,k}$. These parameters depend on the pole $p_{k+1}$ being finite or infinite, we discuss both cases.
\subsection*{Finite pole}
 Let $p_{k+1} \in \mathbb{C}$ which is added in the first component function, meaning that $\pi_{k+1} = 1$. We set $\rho_{i,k} = \eta_{i,k} = 0$ for all $i$, except for $\eta_{1,k} = 1$, so that $\vec{r}_k = \vec{q}_1 = c\vec v_1 \in \operatorname{span}\{\vec v_1\}$, with $c\in \C$. The isometry in equation~\eqref{eq:isometry} gives the correspondence 
 \begin{equation*}
     \widehat{\vec q}_{k+1} = c(A-p_{k+1}I)^{-1}\vec v_1 \longleftrightarrow \vec \phi(z) = c\begin{bmatrix}
       \frac{1}{z-p_{k+1}} \\ 0  
     \end{bmatrix}.
 \end{equation*}
 The vector $\vec{q}_1$ is a scaling of $\vec{v}_1$, and hence, by the connection with the rational function vectors, the pole $p_k$ is added only in the first component. 

The case $\pi_k = 2$ is treated analogously, with the only difference that $\vec{r}_k$ must now be a scaling of $\vec{v}_2$ in order to add the pole only in the second component. Therefore, we choose coefficients $\eta_{1,k}$ and $\eta_{2,k}$ such that
\begin{equation*}
    \vec{r}_k = \eta_{1,k} \vec{q}_1 + \eta_{2,k} \vec{q}_2 
    = c\vec v_2\in \operatorname{span}\{\vec{v}_2\},
\end{equation*}
and set all remaining $\rho_{i,k}$ and $\eta_{i,k}$ to zero. Again, we get the desired correspondence relation
\begin{equation*}
     \widehat{\vec q}_{k+1} = c(A-p_{k+1}I)^{-1}\vec v_2 \longleftrightarrow \vec \phi(z) = c\begin{bmatrix}
       0 \\ \frac{1}{z-p_{k+1}}  
     \end{bmatrix}.
 \end{equation*}

\subsection*{Infinite pole}
Let $p_{k+1} = \nu_{k+1}/\mu_{k+1}= \infty$, $\pi_{k+1} = 1$ and furthermore assume that it is the \emph{first} time we add a pole at infinity (without counting $p_1=\infty$). We choose $\nu_{k+1} = -1$, $\mu_{k+1} = 0$ and add the appropriate vector to our basis by setting $\vec r_k = \rho_{1,k}A\vec q_1 = c\rho_{1,k}A\vec v_1$. Note that this gives the following correspondence relation 
\begin{equation*}
    \widehat{\vec q}_{k+1} = c\rho_{1,k}A\vec v_1 \longleftrightarrow \vec \phi(z) = c\rho_{1,k}\begin{bmatrix}
       z \\ 0  
     \end{bmatrix}.
\end{equation*}
Again, the case $\pi_k = 2$ is completely analogous except that we now take $\vec r_k = \rho_{1,k}A\vec q_1 + \rho_{2,k}A\vec q_2$ such that $\vec r_k \in \operatorname{span}_{\C}\{\vec v_2\}$. 

Finally, we consider the case where the pole at infinity is added once more, i.e. not for the first time in a certain component. Assume $\pi_k = 1$, the other case is treated similarly. Suppose $\ell$ is the previous index where we added a pole at infinity in the first component, i.e. $p_{\ell} = \infty$, $\pi_{\ell} = 1$ and there exists no index $\ell <i\le k$ such that $p_{i} = \infty$ and $\pi_{i} = 1$. Consider the corresponding rational function vectors of $\vec q_{\ell-1}$ and $\vec q_{\ell}$
\begin{equation*}
\begin{array}{ccc}
    \vec{q}_{\ell-1} & \quad & \vec{q}_{\ell} \\
    \big\updownarrow &, \quad & \big\updownarrow \\
    \vec{\phi}_{\ell-1}(z) = \begin{bmatrix} \phi_{(\ell-1),1} \\ \phi_{(\ell-1),2} \end{bmatrix} & \quad & \vec{\phi}_{\ell}(z) = \begin{bmatrix} \phi_{\ell,1} \\ \phi_{\ell,2} \end{bmatrix}.
\end{array}
\end{equation*}
We cannot just take $\vec r_k = \rho_{\ell,k}A\vec q_\ell$ because this would mean that the polynomial degrees of both component functions $\phi_{\ell,1}$ and $\phi_{\ell,2}$ is increased. Since $\pi_{k+1}=1$, we cannot increase $\deg_p(\phi_{\ell,2})$. For this reason, we take a linear combination of $\vec q_{\ell-1}$ and $\vec q_{\ell}$ to lower the polynomial degree of the second component before multiplying with $A$. Let us denote $\deg_p(\phi_{\ell,1}) = \delta_1$ and $\deg_p(\phi_{\ell,2}) = \delta_2$. Also, the pole $p_\ell$ at infinity was added in the first component, and thus, $\deg_p(\phi_{(\ell-1),1}) < \delta_1$ and $\deg_p(\phi_{(\ell-1),2}) \leq \delta_2$. Hence, taking the continuation parameters $\rho_{\ell-1,k}, \rho_{\ell,k}$ such that the polynomial degree of the second component is reduced, i.e.
\begin{equation*}
    \rho_{\ell-1,k} \vec q_{l-1} + \rho_{k,\ell}\vec q_{\ell} \longleftrightarrow \widehat{\vec \phi}(z) = \begin{bmatrix}
        \widehat{\phi}_1 \\ 
        \widehat{\phi}_2
    \end{bmatrix},  \; \; \deg_p(\widehat{\phi}_1) = \delta_1 \; \text{and}\; \deg_p(\widehat{\phi}_2) < \delta_2,  
\end{equation*} 
yields that $ \widehat{\vec q}_{k+1} = A(\rho_{\ell-1,k} \vec q_{l-1} + \rho_{k,\ell}  \vec q_{\ell})$ correctly expands the basis by adding a pole at infinity only in the first component. 

Theoretically, this procedure should give an orthonormal set $\{\vec q_1, \dots, \vec q_{k+1}\}$. 
In practice, finite precision arithmetic causes a loss of orthogonality due to accumulated round-off errors. To correct this, we employ \emph{full} re-orthogonalization in which the current vector is orthogonalized against \emph{all} the previous vectors a second time (cf. equation~\eqref{eq:orthonormalizing}).
To reduce computational cost, one could use \emph{partial} re-orthogonalization (performing the correction only when necessary) or \emph{selective} re-orthogonalization (orthogonalizing against only a subset of the previous vectors).

\section{Numerical Experiments}\label{sec:numexp}
To compare our algorithms, we replicate the two numerical experiments presented by Van Buggenhout et al.~\cite{VanBuVanBaVand2022}. The first experiment uses equidistant nodes on the complex unit circle and equidistant poles on a circle of radius $3/2$. We assign weights $\vec w_{j}$ uniformly distributed within the complex square $\left(\frac{1}{2}, \frac{3}{2}\right) \times\left(\frac{1}{2}i,\frac{3}{2}i\right)$ and use a uniformly random indexing vector $\vec{\pi}_n \in \{1,2\}^n$.

The following four error metrics are used to compare our algorithms:
\begin{itemize}
    \item orthonormality of $Q$,
    \begin{equation*}
        \operatorname{err}_Q = \|Q^HQ-I_n\|_2,
    \end{equation*}
    \item orthonormality of $\vec \phi_j$'s,
    \begin{equation*}
        \operatorname{err}_{\phi}=\|M- I_n\|_2,\; \text{with}\; M_{i,j} = \langle \phi_i, \phi_j\rangle_{\mathcal{R}_n},
    \end{equation*}
    \item relative accuracy of the poles,
    \begin{equation*}
        \operatorname{err}_p = \max_{1 \leq i \leq n-2} \Bigg| \frac{h_{i+2,i}}{k_{i+2,i}} -p_{i+2}\Bigg|/ |p_{i+2}|
    \end{equation*}
    \item and the relative accuracy of the recurrence relation,
    \begin{equation*}
        \operatorname{err}_{r} = \frac{\|ZQK - QH\|_2}{\max\{\|ZQK\|_2,\|QH\|_2\}}.
    \end{equation*}
\end{itemize}

\begin{figure}
    \centering
    \pgfplotsset{height=0.48\linewidth,width=0.49\linewidth}
\pgfplotsset{major grid style={dotted,gray}}

\noindent%
\begin{tikzpicture}[scale=1,baseline]%
\begin{groupplot}[
    group style={
        group size=2 by 1,
        horizontal sep=2cm,
    },
]

\nextgroupplot[
    label style={font=\small},
    tick label style={font=\tiny},
    legend columns = 1,
    mark options={scale=0.8},
    legend style={
        at={(0.01,0.99)},
        anchor=north west,
        row sep=-0.2pt,
        nodes={scale=0.5, transform shape}
    },
    legend cell align={left},
    grid=major,
    xlabel={$n$},
    xmin=5, xmax=300,
    ymin=1e-17, ymax=1e11,
    ymode=log,
    xtick distance=50,
    minor ytick={1e-16,1e-15,1e-14,1e-12,1e-11,1e-10,1e-8,1e-7,1e-6,1e-4,1e-3,1e-2,1e0,1e1,1e2,1e4,1e5,1e6,1e8,1e9,1e10}
]

\addplot[red, mark=*, mark repeat=2, solid]
    table[x={Nvec}, y={err_orth_Q_up}] {experiments/experiment1_averaged.dat};
    \addlegendentry{$\operatorname{err}_Q$ \texttt{Updating}}
\addplot[red, mark=triangle*, mark repeat=2, solid]
    table[x={Nvec}, y={err_orth_phi_up}] {experiments/experiment1_averaged.dat};
    \addlegendentry{$\operatorname{err}_{\vec \phi}$ \texttt{Updating}}
\addplot[blue, mark=*, mark repeat=2, solid]
    table[x={Nvec}, y={err_orth_Q_kryl}] {experiments/experiment1_averaged.dat};
    \addlegendentry{$\operatorname{err}_Q$ \texttt{Krylov}}
\addplot[blue, mark=triangle*, mark repeat=2, solid]
    table[x={Nvec}, y={err_orth_phi_kryl}] {experiments/experiment1_averaged.dat};
    \addlegendentry{$\operatorname{err}_{\vec \phi}$ \texttt{Krylov}}

\nextgroupplot[label style={font=\scriptsize}, tick label style={font=\tiny}, legend columns = 1, mark options={scale=0.8}, legend style={at = {(0.01,0.99)}, anchor = north west,row sep=-0.2pt,nodes={scale=0.5, transform shape}},grid=major, xlabel={$n$}, xmin = 5, xmax = 300, ymin=10e-17,ymax=10e-13, ymode=log, xtick distance=50, ytick distance = 1000,minor ytick={1e-16,1e-15,1e-14,1e-13}]

\addplot[red, mark=*, mark repeat = 2,solid] table[x={Nvec}, y={err_poles_up}] {experiments/experiment1_averaged.dat};
\addlegendentry{$\operatorname{err}_{\vec p}$ \texttt{Updating}}
\addplot[red, mark=triangle*, mark repeat = 2,solid] table[x={Nvec}, y={err_recc_up}] {experiments/experiment1_averaged.dat};
\addlegendentry{$\operatorname{err}_{r}$ \texttt{Updating}}
\addplot[blue, mark=*, mark repeat = 2,solid] table[x={Nvec}, y={err_poles_kryl}] {experiments/experiment1_averaged.dat};
\addlegendentry{$\operatorname{err}_{\vec p}$ \texttt{Krylov}}
\addplot[blue, mark=triangle*,mark repeat = 2,solid] table[x={Nvec}, y={err_recc_kryl}] {experiments/experiment1_averaged.dat};
\addlegendentry{$\operatorname{err}_{r}$ \texttt{Krylov}}
\end{groupplot}
\end{tikzpicture}
    \caption{Left: errors $\operatorname{err}_Q$ and $\operatorname{err}_{\vec \phi}$. Right: errors $\operatorname{err}_{\vec p}$ and $\operatorname{err}_r$ for the updating (red) and Krylov-type (blue) algorithms, with equidistant nodes on the complex unit circle and equidistant poles on a circle of radius $3/2$. All errors are averaged over 5 runs.}
    \label{fig:experiment1}
\end{figure}
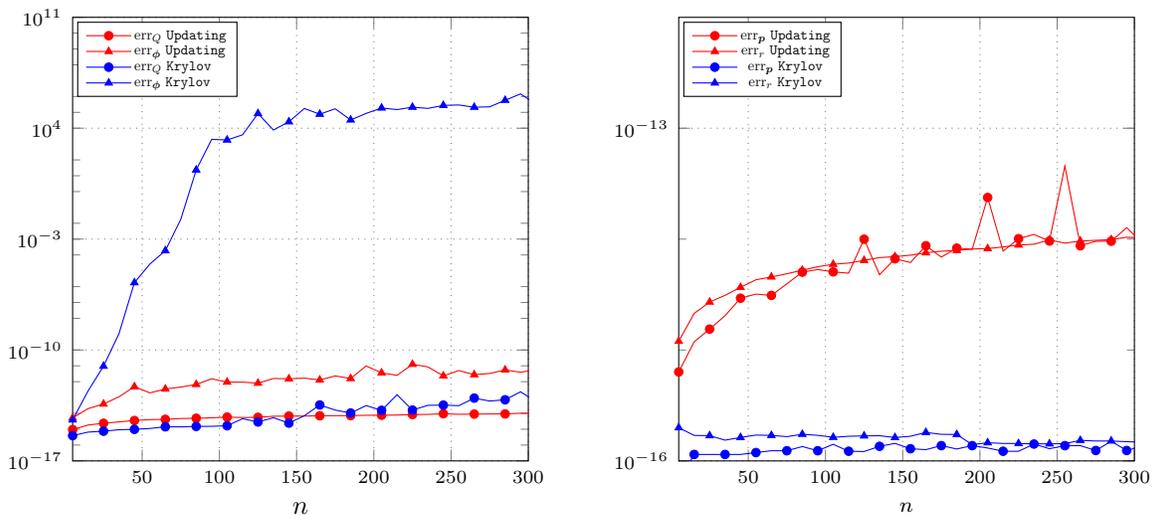
Figure \ref{fig:experiment1} shows the different error measures for both algorithms for increasing problem sizes. The errors are averaged over 5 runs because we use random weights and random indexing vectors.
It follows that both algorithms compute a matrix $Q$ which is numerically close to being unitary. However, the orthonormality for the $\phi_j$'s is much worse for the Krylov algorithm; an exponential loss of orthogonality is observed as $n$ increases. A similar error behaviour for $\operatorname{err}_{\vec \phi}$ is observed in the experiments for the non-vector case~\cite{VanBuVanBaVand2022}. They explained this by looking at the conditioning number of the underlying linear system for evaluating the rational functions, necessary to compute the matrix $M$. The updating algorithm uses unitary similarity transformations; consequently, when nodes are chosen on the unit circle, the associated pencil consists of unitary matrices. This results in a well-conditioned system for evaluating $\vec \phi_j(z)$ (see eq.~\eqref{eq:linearsystem}), in contrast to the system obtained by the Krylov method. The errors $\operatorname{err}_{p}$ and $\operatorname{err}_{r}$ for the Krylov algorithm are close to machine precision. In contrast, we observe larger errors for the updating algorithm. 

In the second experiment, we show the effect of having two nodes really close to each other, and the corresponding weights being linearly dependent. More concretely, we consider $n$ equidistant nodes on the unit circle except for the $n_{\ell}$th node and $n_{\ell+1}$th node, which are located at a small angle $\theta=10^{-6}$ from each other. We again take the same uniformly random weights as in the first experiment, except with the extra condition that $\vec w_{n_\ell}$ and $\vec w_{n_{\ell+1}}$ are linearly dependent. Intuitively, this means that our inner product with $n$ nodes is close to an inner product with $n-1$ nodes, implying that the $n$th orthogonal rational vector $\vec \phi_n$ will not extend our orthogonal basis. This explains the jump in the orthogonality error $\operatorname{err}_{\vec \phi}$ for the updating algorithm
at $n=n_{\ell} = 40$, as can be seen in Figure \ref{fig:experiment2}.

\begin{figure}
    \centering
    \pgfplotsset{height=0.48\linewidth,width=0.49\linewidth}
\pgfplotsset{major grid style={dotted,gray}}

\noindent%
\begin{tikzpicture}[scale=1,baseline]%
\begin{groupplot}[
    group style={
        group size=2 by 1,
        horizontal sep=2cm,
    },
]

\nextgroupplot[label style={font=\small}, tick label style={font=\tiny}, legend columns = 1,mark options={scale=0.8}, legend style={at = {(0.01,0.99)}, anchor = north west,row sep=-0.2pt,nodes={scale=0.5, transform shape}},legend cell align={left}, grid=major, xlabel={$n$}, xmin = 5, xmax = 300, ymin=1e-17,ymax=10e11, ymode=log, xtick distance=50, ytick distance = 10000, minor ytick={1e-16,1e-15,1e-14,1e-12,1e-11,1e-10,1e-8,1e-7,1e-6,1e-4,1e-3,1e-2,1e0,1e1,1e2,1e4,1e5,1e6,1e8,1e9,1e10}]

\addplot[red, mark=*, mark repeat = 1,solid] table[x={Nvec}, y={err_orth_Q_up}] {experiments/experiment2_averaged.dat};
\addlegendentry{$\operatorname{err}_Q$ \texttt{Updating}}
\addplot[red, mark=triangle*, mark repeat = 1,solid] table[x={Nvec}, y={err_orth_phi_up}] {experiments/experiment2_averaged.dat};
\addlegendentry{$\operatorname{err}_{\vec \phi}$ \texttt{Updating}}
\addplot[blue, mark=*, mark repeat = 1,solid] table[x={Nvec}, y={err_orth_Q_kryl}] {experiments/experiment2_averaged.dat};
\addlegendentry{$\operatorname{err}_Q$ \texttt{Krylov}}
\addplot[blue, mark=triangle*, mark repeat = 1, solid] table[x={Nvec}, y={err_orth_phi_kryl}] {experiments/experiment2_averaged.dat};
\addlegendentry{$\operatorname{err}_{\vec \phi}$ \texttt{Krylov}}

\nextgroupplot[label style={font=\small}, tick label style={font=\tiny}, legend columns = 1, mark options={scale=0.8}, legend style={at = {(0.01,0.99)}, anchor = north west,row sep=-0.2pt,nodes={scale=0.5, transform shape}},grid=major, xlabel={$n$}, xmin = 5, xmax = 300, ymin=10e-17,ymax=10e-10, ymode=log, xtick distance=50, ytick distance = 1000,minor ytick={1e-16,1e-15,1e-14,1e-13}]

\addplot[red, mark=*, mark repeat = 1,solid] table[x={Nvec}, y={err_poles_up}] {experiments/experiment2_averaged.dat};
\addlegendentry{ $\operatorname{err}_{\vec p}$ \texttt{Updating}}
\addplot[red, mark=triangle*, mark repeat = 1,solid] table[x={Nvec}, y={err_recc_up}] {experiments/experiment2_averaged.dat};
\addlegendentry{$\operatorname{err}_{r}$ \texttt{Updating}}
\addplot[blue, mark=*, mark repeat = 1,solid] table[x={Nvec}, y={err_poles_kryl}] {experiments/experiment2_averaged.dat};
\addlegendentry{$\operatorname{err}_{\vec p}$ \texttt{Krylov}}
\addplot[blue, mark=triangle*,mark repeat = 1,solid] table[x={Nvec}, y={err_recc_kryl}] {experiments/experiment2_averaged.dat};
\addlegendentry{$\operatorname{err}_{r}$ \texttt{Krylov}}

\end{groupplot}
\end{tikzpicture}
    \caption{Left: errors $\operatorname{err}_Q$ and $\operatorname{err}_{\vec \phi}$. Right: errors $\operatorname{err}_{\vec p}$ and $\operatorname{err}_r$ for the updating (red) and Krylov-type (blue) algorithms, with equidistant nodes on the complex unit circle and two of them close to each other. All errors are averaged over 5 runs.}
    \label{fig:experiment2}
\end{figure}
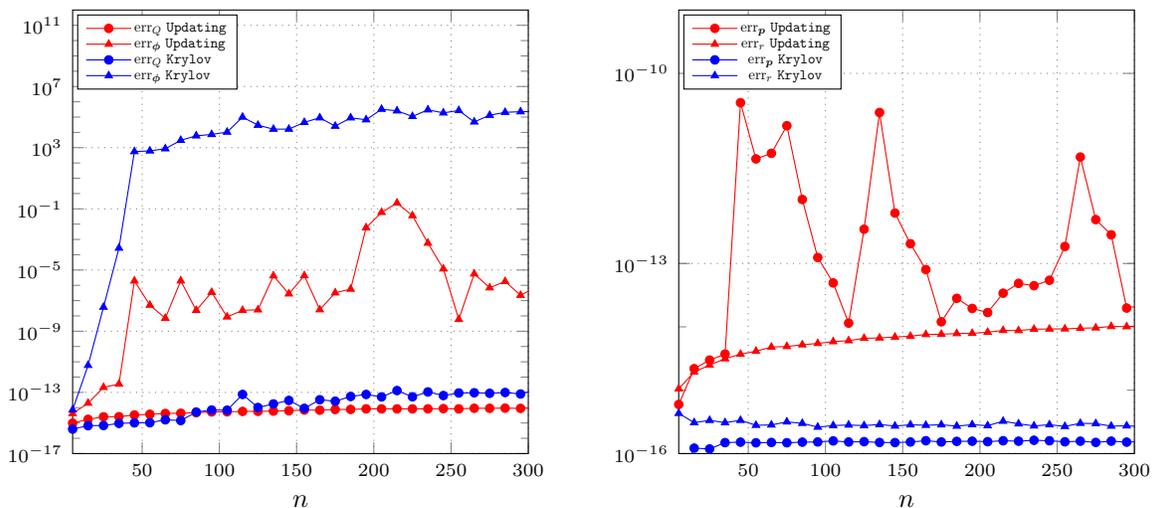

\section{Application: rational approximation of $\sqrt{z}$}\label{sec:application}
In this final section, we show the robustness of the updating algorithm by applying it to the rational approximation of $\sqrt z$ on the interval $[0,1]$, where the poles are exponentially clustered near the singularity at the origin\footnote{A \texttt{MATLAB} implementation can be found at \url{https://gitlab.kuleuven.be/numa/public/iep-ratvec}.}. Herremans, Huybrechs and Trefethen~\cite{HerremansHuybrechsTrefethen2023} extended the ``lightning method'' for analytic functions with branch point singularities, by adding a polynomial term or partial fractions with poles clustered towards infinity. For the case of $f(z)=\sqrt z$, they show that the optimal minimax convergence rate $\mathcal{O}(\exp(-\pi\sqrt{2{N}}))$, where $N=N_1+N_2$, is attained by a \emph{lightning + polynomial} approximation of the form
\begin{equation}\label{eq:ratapprox1}
    r(z) = \sum_{j=1}^{N_1}\frac{a_j}{z-p_j} + \sum_{j=0}^{N_2}b_jz^j,
\end{equation}
    with tapered exponentially clustered poles, 
\begin{equation}\label{eq:taperedpoles}
    p_j = -C\exp(-\sigma(\sqrt{N_1} - \sqrt{j})), \quad j = 1,2,\ldots,N_1.
\end{equation}
Similarly as done by Herremans et al.~\cite{HerremansHuybrechsTrefethen2023}, we use $C=2$, $\sigma = 2\sqrt 2 \pi$. Furthermore, we take the polynomial degree $N_2 = \text{ceil}(2\sqrt{N_1})$. 
The lightning method uses least-squares fitting to find the coefficient vector $\vec c$ containing the $a_j$'s and $b_j$'s,
\begin{equation*}
    \min_{\vec c\in\R^{N+1}} \|A\vec c- \vec f\|_2, \;A_{i,j} = \psi_j(t_i), \; \vec f_i = f(t_i),\
\end{equation*}
with sampling points $\{t_i\}_{i=1}^M$ and the approximation set $\{\psi_j\}_{j=1}^N$ containing the partial fractions $\{\frac{p_j}{z-p_j}\}_{j=1}^{N_1}$ and polynomial basis $\{z^j\}_{j=0}^{N_2}$. We will assume that the number of sampling points $M$ is at least equal to $N$.
The sampling points are chosen as an exponentially graded
grid between 
$10^{-10}$ and $1$, i.e., in Matlab notation: 
{\tt logspace(-10,0,M-1)}, with the additional value $0$.
Although the obtained system is severely ill-conditioned, one still finds a good approximation using oversampling and regularization~\cite{HerremansHuybrechsTrefethen2023,AdcockHuybrechs2019}.

The remainder of the section focuses on three aspects:
\begin{itemize}
    \item The reformulation of the approximation problem in the framework of orthogonal rational vectors functions (of length 2) belonging to $\mathcal{R}_N \subset \mathcal{R}_M$.
    \item The choice of parameters for constructing the vector space $\mathcal{R}_M$ and a corresponding inner product $\langle \cdot, \cdot \rangle_{\mathcal{R}_M}$.
    \item An experiment showing the minimax convergence rate $\mathcal{O}(\exp(-\pi\sqrt{2{N}}))$ attained by the approximants.
\end{itemize}
The approximant in our framework is of the form
\begin{equation*}
    r(z) = \frac{r_1(z)}{r_2(z)} = \frac{n_1(z)/d_1(z)}{n_2(z)/d_2(z)}, 
\end{equation*}
where $n_1,n_2,d_1,d_2$ are polynomials of an appropriate degree and such that $d_1(z)$ and $d_2(z)$ are determined in advance, i.e., we fix the poles of $r_1(z)$ and $r_2(z)$. The goal is to have $r_1(t_i)/r_2(t_i) \approx f(t_i)$, which we achieve by solving the least-squares problem,
\begin{equation}\label{eq:linearizedleastsquares}
    \min_{r(z)}\sum_{i=1}^M\left(r_1(t_i)f_{i,2} - r_2(t_i)f_{i,1}\right)^2,
\end{equation}
with $f(t_i) = f_{i,1}/f_{i,2}$. This can be seen as a linearization of the standard least-squares problem with the rational function $r(z)$. The approximant $r(z) = r_1(z)/r_2(z)$ is obtained from a vector 
\begin{equation*}
    \vec r(z)=\begin{bmatrix}
    r_1(z) \\
    r_2(z)
\end{bmatrix} \in \mathcal{R}_N,
\end{equation*} where $\mathcal{R}_N \subset \mathcal{R}_M$ is the space of rational function vectors of dimension $N$. The pole vector $\vec p_N$ defining this space $\mathcal{R}_N$ contains poles of the first component function $r_1(z)$ and poles of the second component function $r_2(z)$. The extension $\mathcal{R}_M$ is constructed by adding $M-N$ additional poles. As before, the indexing vector $\vec \pi_M$ determines if the pole is located in $r_1(z)$ or $r_2(z)$. Finally, we define an inner product $\langle \cdot,\cdot\rangle_{\mathcal{R}_M}$ given by the following nodes and weights,
\begin{equation*}
    z_i = t_i, \;\quad \vec w_i = \begin{bmatrix}
            f_{i,2},\; -f_{i,1}
        \end{bmatrix}^T.
\end{equation*}

Assume we have an orthonormal basis $\{\vec \phi_i\}_{i=1}^M$ of $\mathcal{R}_M$ obtained by either the updating or Krylov-type algorithm and write $\vec r(z)$ as a linear combination of these basis functions $\vec r(z) = \sum_{i=1}^Mc_i\vec \phi_i(z)$ where $c_i\in \C$. This allows us to rewrite equation~\eqref{eq:linearizedleastsquares} as
\begin{align*}
    \min \langle \vec r(z) , \vec r(z)\rangle_{\mathcal{R}_M} &= \min \Big\langle \sum_{i=1}^Mc_i\vec \phi_i(z),\sum_{i=1}^Mc_i\vec \phi_i(z)  \Big\rangle_{{\mathcal{R}_M}} \\
    &= \min \sum_{i=1}^M c_i^2.
\end{align*}

Since we have the constraint $\vec r(z) \in \mathcal{R}_N$, we have $c_{N+1} = \cdots = c_{M} = 0$. We could impose extra conditions on $\vec r(z)$ to have a unique solution for this minimization problem. For example, requiring $\|\vec r(z)\|_{\mathcal{R}_n} = 1$ and that $\vec r(z) \in \mathcal{R}_N \setminus \mathcal{R}_{N-1}$, then we obtain the solution $c_1=\ldots=c_{N-1} = 0$ and $c_N = 1$. However, we propose a different strategy by choosing $\vec r(z) = \vec \phi_i(z)$, i.e. set $c_j=1$ if $j=i$ and $c_j = 0$ otherwise. We select the vector $\vec \phi_i(z)$ for which $\|f-r\|_\infty$ is closest to the optimal convergence rate. Evaluating $r(z)$, needed to compute this maximal error, is done by solving the linear system~\eqref{eq:linearsystem}. The typical convergence behaviour, as seen by the blue/red line in Figure~\ref{fig:exp3_01}, is a rapid initial decrease, followed by stagnation occurring at the end because of round-off errors. We exploit this by taking $\vec r(z) = \vec \phi_i$ where $i$ is selected from this stagnation region $\mathbb{S} \subset [1,N]$, and for which $\|f-r\|_\infty$ is closest to the optimal convergence rate $\mathcal{O}(\exp(-\pi\sqrt{2N}))$.

To approximate $f(z)=\sqrt{z}$, one could take a \emph{ligtning + polynomial} approximant $r_1(z)$ and set $r_2(z)=1$. This agrees with the approximant~\eqref{eq:ratapprox1} used by Herremans et al.~\cite{HerremansHuybrechsTrefethen2023}, but does not make full use of the fact that we have rational \emph{vectors} since the second component function $r_2(z)$ is just a constant. Therefore, in the context of approximating $f(z)=\sqrt{z}$, we impose $r(0)=0$ which is satisfied by taking $r_2 = 1/z$, for example. Therefore, the parameters are
\begin{equation*}
    r_2(z) = 1/z, \; f_{i,1}= \sqrt{t_i}\;,\; f_{i,2} = 1,
\end{equation*}
and $r_1(z)$ being of the form \eqref{eq:ratapprox1}. The appropriate vector space $\mathcal{R}_M$ and inner product $\langle\cdot,\cdot \rangle_{\mathcal{R}_M}$ for these parameters are defined by taking
\begin{center}
\begin{tikzpicture}
    \node at (0,0) {$\begin{aligned} 
        \vec p_M &=
        \begin{bmatrix}
            \infty, \infty,0,p_1, \ldots, p_{N_1}, \infty, \ldots, \infty
        \end{bmatrix}^T
        \in \overline{\mathbb{C}}^{3+N_1+N_2+N_3}, \\
        \vec \pi_M &=
        \begin{bmatrix}
            1,2,2,1, \ldots, 1, 2, \ldots, 2
        \end{bmatrix}^T. \end{aligned}$};
    \draw[color=black,decorate,decoration={brace,mirror}] (-2.6,-0.65)--(-1.37,-.65);
    \node[color=black] at ($(-2.1,-.9)$) [align=center] {\tiny $\#N_1+N_2$};
    \draw[color=black,decorate,decoration={brace,mirror}] (-1.35,-.65)--(0.1,-.65);
    \node[color=black] at ($(-0.7,-.9)$) [align=center] {\tiny $\#N_3$};
\end{tikzpicture}
\end{center}
More precisely, the poles consist of two initial poles at infinity in the first and second component, a pole at zero in the second component, $N_1$ finite tapered poles $p_j$~\eqref{eq:taperedpoles} and $N_2$ poles at infinity, both in the first component, and finally $N_3$ additional poles at infinity in the second component. However, recall that adding a pole at infinity more than once requires the expensive computation of the highest degree coefficient, and is therefore undesired.
To circumvent this, the $N_2+N_3$ poles at infinity are approximated by large scalings of shifted roots of unity: $$10^{16}\exp\left(\frac{2\pi k\mathrm{i}}{N_2+N_3}\right) +\frac{1}{2}.$$
We remark that we use a total amount of $M = 3+N_1+N_2+ N_3$ poles in $\mathcal{R}_M$, while the number of poles used for the approximant $r(z)$ is $N=N_1+N_2+3$. The additional $N_3$ poles at infinity are included for oversampling; in our experiments, we set $N_3 = 20N$.
  
As indicated before, a solution is selected from a certain stagnation region $\mathbb{S}$. From the convergence behaviour in Figure~\ref{fig:exp3_01}, it is clear that the error decreases rapidly when adding the finite tapered poles (blue line). The error stagnates after adding a couple of poles at infinity (red line). Therefore, we take the stagnation interval $\mathbb{S} = [N-N_2,N]$, and select $\vec r(z) = \vec \phi_i$ where $i\in \mathbb{S}$ and the error is closest to the optimal convergence rate. More concretely, this approximant is of the form
\begin{equation}\label{eq:rwithzero}
    r(z) = z\left( \sum_{j=1}^{N_1}\frac{a_j}{z-p_j} + \sum_{j=0}^{k}b_jz^j\right),
\end{equation}
where $k<N_2$. Observe that, in Figure~\ref{fig:exp3_02}, the evaluation $r(0)$ stays of the size of the machine epsilon for the different selected approximants. Removing this pole at 0 in the second component would result in an approximant
\begin{equation}\label{eq:rwithoutzero}
    \widehat{r}(z) = \sum_{j=1}^{N_1}\frac{a_j}{z-p_j} + \sum_{j=0}^{k}b_jz^j,
\end{equation}
where $k<N_2$. This is a \emph{lightning + polynomial} approximation and still follows the optimal minimax convergence rate. However, the value $r(0)$ initially decreases at a rate of $\mathcal{O}(\exp(-\pi\sqrt{2N}))$, but for larger $N$, it also drops to the size of the machine precision.

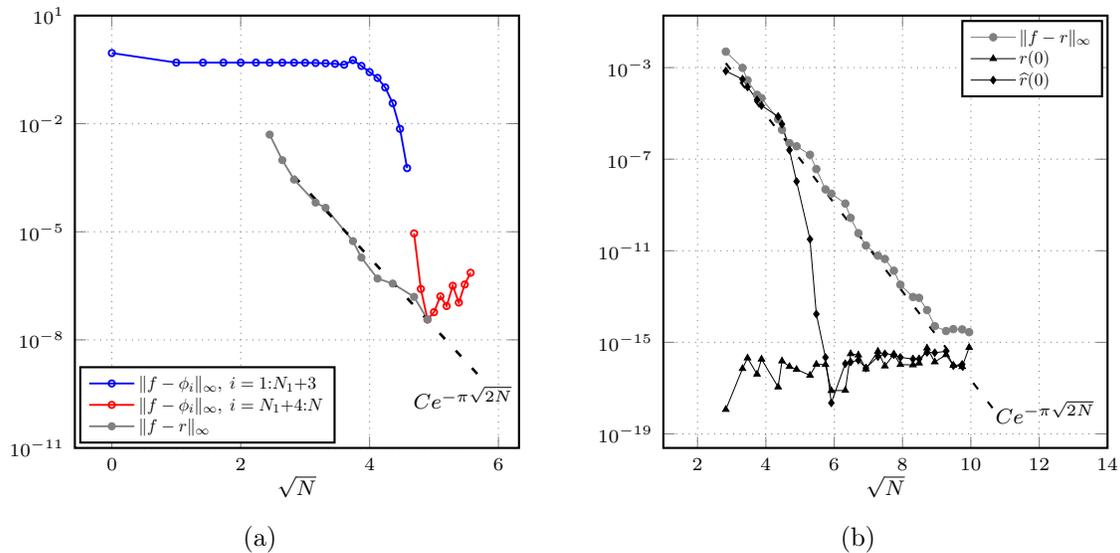
\begin{figure}[h!]
\centering
\pgfplotsset{height=0.96\linewidth,width=0.98\linewidth}
\pgfplotsset{major grid style={dotted,gray}}
\captionsetup[subfigure]{labelfont=rm} 
\centering
\begin{subfigure}{0.49\linewidth}
\centering
\begin{tikzpicture}
\begin{axis}[
    label style={font=\scriptsize},
    tick label style={font=\tiny},
    grid=major,
    xlabel={$\sqrt{N}$},
    legend style={
        at={(0.01,0.01)},
        anchor=south west,
        row sep=-0.2pt,
        nodes={scale=0.6, transform shape}
    },
    legend cell align={left},
    ymode=log,
    line width=0.8pt,
    ymin=1e-11, ymax=1e1,
    xlabel style={yshift=2mm},
    mark options={scale=0.7},
]

\addplot[
    mark=o,
    color=blue,
    line width= 0.7pt,
    mark size = 1.7pt
]
table[x expr={\thisrow{N}}, y={Maxerr}] {experiments/exp3_01.dat};
\addlegendentry{$\|f-\phi_{i}\|_{\infty},\;i=1{:}N_1{+}3$}

\addplot[
    mark=o,
    color=red,
    line width=0.7pt,
    mark size = 1.7pt
]
table[x expr={\thisrow{N}}, y={Maxerr}] {experiments/exp3_02.dat};
\addlegendentry{$\|f-\phi_{i}\|_{\infty},\;i=N_1{+}4{:}N$}

\addplot[
    mark=*,
    color=gray,
    line width=0.7pt,
    mark size = 1.7pt
]
table[x expr={\thisrow{N}}, y={Maxerr}] {experiments/exp3_03.dat};
\addlegendentry{$\|f-r\|_{\infty}$}


\addplot[
    loosely dashed,
    line width=1pt,
]
table[
    col sep=space,
    header=true,
    x expr={sqrt(\thisrow{N})},
    y expr={100*exp(-pi*sqrt(2*\thisrow{N}))},
] {experiments/Nvec2.dat}
node[above, pos=1, xshift=-30pt, yshift=-7pt, anchor=west] {\scriptsize$C e^{-\pi \sqrt{2N}}$};

\end{axis}
\end{tikzpicture}
\subcaption{}\label{fig:exp3_01}
\end{subfigure}
\hfill
\begin{subfigure}{0.49\linewidth}
\centering
\begin{tikzpicture}
\begin{axis}[
    label style={font=\scriptsize},
    tick label style={font=\tiny},
    grid=major,
    xlabel={$\sqrt{N}$},
    legend style={
        at={(0.99,0.99)},
        anchor=north east,
        row sep=-0.2pt,
        nodes={scale=0.6, transform shape}
    },
    legend cell align={left},
    ymode=log,
    line width=0.8pt,
    xmin = 1,
    xmax = 14,
    xlabel style={yshift=2mm},
    mark options={scale=0.7},
]

\addplot[
    mark=*,
    mark size=2pt,
    line width=0.3pt,
    color=gray,
]
table[
    col sep=space,
    header=true,
    x expr={sqrt(\thisrow{N})},
    y expr={\thisrow{Err}},
] {experiments/taperedpoly.dat};
\addlegendentry{$\|f-r\|_{\infty}$}

\addplot[
    solid,
    mark=triangle*,
    mark size=2pt,
    line width=0.3pt,
    color=black,
]
table[
    col sep=space,
    header=true,
    x expr={sqrt(\thisrow{N})},
    y expr={\thisrow{fzero}},
] {experiments/taperedpoly.dat};
\addlegendentry{$r(0)$}

\addplot[
    solid,
    mark=diamond*,
    mark size=2pt,
    line width=0.3pt,
    color=black,
]
table[
    col sep=space,
    header=true,
    x expr={sqrt(\thisrow{N})},
    y expr={\thisrow{fzero}},
] {experiments/taperedpoly_withoutzero.dat};
\addlegendentry{$\widehat{r}(0)$}

\addplot[
    loosely dashed,
    line width=0.8pt,
]
table[
    col sep=space,
    header=true,
    x expr={sqrt(\thisrow{N})},
    y expr={100*exp(-pi*sqrt(2*\thisrow{N})+1.5)},
] {experiments/Nvec.dat}
node[above, pos=1, xshift=-4pt, anchor=west] {\scriptsize$C e^{-\pi \sqrt{2N}}$};
\end{axis}
\end{tikzpicture}
\subcaption{}\label{fig:exp3_02}
\end{subfigure}
\caption{Left: $\|f - \phi_{i}(z)\|_{\infty}$ for $2 \le i \le N_1 + 2$ (blue) and $N_1 + 3 \le i \le N$ (red). 
Grey dots indicate the optimal approximations selected for certain $N$ following the 
theoretical minimax convergence speed $\mathcal{O}\left(\exp(-\pi \sqrt{2N})\right)$. 
Right: Error $\|f - r\|_{\infty}$ for different values of N and a comparison of the evaluations $r(0)$ (see eq.~\eqref{eq:rwithzero}) and $\widehat{r}(0)$ (see eq.~\eqref{eq:rwithoutzero}).
}\label{fig:experiment3}
\end{figure}

\section{Conclusion and future work}\label{sec:conclusion}
In this paper we have shown that computing an orthonormal basis of rational
function vectors having $k$ components with respect to a discrete inner product is equivalent
to an inverse eigenvalue problem concerning a pencil of $k$-Hessenberg matrices.
Two algorithms to solve this problem were presented: an updating algorithm
and a rational Krylov-type algorithm.
The numerical experiments indicate that the updating algorithm leads to more
accurate results compared to the rational Krylov-type method.
To show the robustness of the updating algorithm, we considered the problem
of approximating $\sqrt{z}$ on $[0,1]$ by a rational function having
tapered exponentially clustered poles.
For future work, it would be interesting to investigate how a downdating procedure—i.e., removing a node and its corresponding weight—might be designed. As noted at the end of Section~\ref{sec:updating}, one could develop an algorithm that manipulates the factorization of $K_k$ using techniques similar to those in \cite{MachVanBaVand2014}. Such an approach may improve both numerical stability and computational efficiency.

\section*{Acknowledgements}
Special thanks go to Marc Van Barel for the many useful discussions and advice on the \texttt{MATLAB} implementation. Furthermore, I would like to thank Raf Vandebril and Kobe Bergmans for carefully reading an initial draft of this manuscript.

\section*{Funding}
This work was supported by the Fund for Scientific Research Flanders (FWO), G0B0123N (Short recurrence relations for rational Krylov and orthogonal rational functions inspired by modified
moments). 

\bibliographystyle{elsarticle-num} 
\bibliography{references.bib}

@article {VanBuVanBaVand2022,
    AUTHOR = {Van Buggenhout, Niel and Van Barel, Marc and Vandebril, Raf},
     TITLE = {Generation of orthogonal rational functions by procedures for
              structured matrices},
   JOURNAL = {Numer. Algorithms},
  FJOURNAL = {Numerical Algorithms},
    VOLUME = {89},
      YEAR = {2022},
    NUMBER = {2},
     PAGES = {551--582},
      ISSN = {1017-1398,1572-9265},
   MRCLASS = {65F18 (65F25)},
  MRNUMBER = {4362988},
MRREVIEWER = {Salman\ Ahmadi-Asl},
       DOI = {10.1007/s11075-021-01125-6},
}

@book {GolubVanLoan2013,
    AUTHOR = {Golub, Gene H. and Van Loan, Charles F.},
     TITLE = {Matrix {C}omputations},
   EDITION = {Fourth},
 PUBLISHER = {Johns Hopkins University Press, Baltimore, MD},
      YEAR = {2013},
     PAGES = {xiv+756},
      ISBN = {978-1-4214-0794-4; 1-4214-0794-9; 978-1-4214-0859-0},
   MRCLASS = {65-02 (65Fxx)},
  MRNUMBER = {3024913},
MRREVIEWER = {J\"org\ Liesen},
}

@article {MachVanBaVand2014,
    AUTHOR = {Mach, Thomas and Van Barel, Marc and Vandebril, Raf},
     TITLE = {Inverse eigenvalue problems for extended {H}essenberg and
              extended tridiagonal matrices},
   JOURNAL = {J. Comput. Appl. Math.},
  FJOURNAL = {Journal of Computational and Applied Mathematics},
    VOLUME = {272},
      YEAR = {2014},
     PAGES = {377--398},
      ISSN = {0377-0427,1879-1778},
   MRCLASS = {65F18 (65F25)},
  MRNUMBER = {3227393},
MRREVIEWER = {Raffaella\ Pavani},
       DOI = {10.1016/j.cam.2014.03.015},
}

@PhDThesis{Guttel2010,
  title   = {Rational {K}rylov Methods for Operator Functions},
  author  = {G\"{u}ttel, Stefan},
  school  = {Technische Universit\"{a}t Bergakademie Freiberg},
  address = {Germany},
  year    = {2010},
}

@article {GraggHarrod1984,
    AUTHOR = {Gragg, William B. and Harrod, William J.},
     TITLE = {The numerically stable reconstruction of {J}acobi matrices
              from spectral data},
   JOURNAL = {Numer. Math.},
  FJOURNAL = {Numerische Mathematik},
    VOLUME = {44},
      YEAR = {1984},
    NUMBER = {3},
     PAGES = {317--335},
      ISSN = {0029-599X,0945-3245},
   MRCLASS = {65F25 (65F15)},
  MRNUMBER = {757489},
MRREVIEWER = {Ludwig\ Elsner},
       DOI = {10.1007/BF01405565},
}

@article {BultheelVanBarel1995,
    AUTHOR = {Bultheel, Adhemar and Van Barel, Marc},
     TITLE = {Vector orthogonal polynomials and least squares approximation},
   JOURNAL = {SIAM J. Matrix Anal. Appl.},
  FJOURNAL = {SIAM Journal on Matrix Analysis and Applications},
    VOLUME = {16},
      YEAR = {1995},
    NUMBER = {3},
     PAGES = {863--885},
      ISSN = {0895-4798},
   MRCLASS = {65F25 (41A45 65D15)},
  MRNUMBER = {1337650},
MRREVIEWER = {John\ B.\ Kioustelidis},
       DOI = {10.1137/S0895479893244572},
}

@article {VanBFasinoGemignaniMastronardi2005,
    AUTHOR = {Van Barel, Marc and Fasino, Dario and Gemignani, Luca and
              Mastronardi, Nicola},
     TITLE = {Orthogonal rational functions and structured matrices},
   JOURNAL = {SIAM J. Matrix Anal. Appl.},
  FJOURNAL = {SIAM Journal on Matrix Analysis and Applications},
    VOLUME = {26},
      YEAR = {2005},
    NUMBER = {3},
     PAGES = {810--829},
      ISSN = {0895-4798,1095-7162},
   MRCLASS = {42C05 (65F18)},
  MRNUMBER = {2137488},
MRREVIEWER = {Heike\ Fa\ss bender},
       DOI = {10.1137/S0895479803444454},
}

@article {Ruhe1984,
    AUTHOR = {Ruhe, Axel},
     TITLE = {Rational {K}rylov sequence methods for eigenvalue computation},
   JOURNAL = {Linear Algebra Appl.},
  FJOURNAL = {Linear Algebra and its Applications},
    VOLUME = {58},
      YEAR = {1984},
     PAGES = {391--405},
      ISSN = {0024-3795,1873-1856},
   MRCLASS = {65F15 (15-04 15A18)},
  MRNUMBER = {739296},
MRREVIEWER = {F.\ Szidarovszky},
       DOI = {10.1016/0024-3795(84)90221-0},
}

@article {DelvauxVanBa2005,
    AUTHOR = {Delvaux, Steven and Van Barel, Marc},
     TITLE = {Orthonormal rational function vectors},
   JOURNAL = {Numer. Math.},
  FJOURNAL = {Numerische Mathematik},
    VOLUME = {100},
      YEAR = {2005},
    NUMBER = {3},
     PAGES = {409--440},
      ISSN = {0029-599X,0945-3245},
   MRCLASS = {42C05 (65D15 65F18 65F30)},
  MRNUMBER = {2194525},
MRREVIEWER = {Youss\`ef\ Ben Cheikh},
       DOI = {10.1007/s00211-005-0586-5},
}

@article {HerremansHuybrechsTrefethen2023,
    AUTHOR = {Herremans, Astrid and Huybrechs, Daan and Trefethen, Lloyd N.},
     TITLE = {Resolution of singularities by rational functions},
   JOURNAL = {SIAM J. Numer. Anal.},
  FJOURNAL = {SIAM Journal on Numerical Analysis},
    VOLUME = {61},
      YEAR = {2023},
    NUMBER = {6},
     PAGES = {2580--2600},
      ISSN = {0036-1429,1095-7170},
   MRCLASS = {65F20 (41A20 65D05 65E05)},
  MRNUMBER = {4667263},
MRREVIEWER = {Arnak\ Poghosyan},
       DOI = {10.1137/23M1551821},
}

@article {Kressner2006,
    AUTHOR = {Kressner, Daniel},
     TITLE = {Block algorithms for reordering standard and generalized
              {S}chur forms},
   JOURNAL = {ACM Trans. Math. Software},
  FJOURNAL = {Association for Computing Machinery. Transactions on
              Mathematical Software},
    VOLUME = {32},
      YEAR = {2006},
    NUMBER = {4},
     PAGES = {521--532},
      ISSN = {0098-3500,1557-7295},
   MRCLASS = {65F15 (65Y20)},
  MRNUMBER = {2290444},
       DOI = {10.1145/1186785.1186787},
}

@article {AdcockHuybrechs2019,
    AUTHOR = {Adcock, Ben and Huybrechs, Daan},
     TITLE = {Frames and numerical approximation},
   JOURNAL = {SIAM Rev.},
  FJOURNAL = {SIAM Review},
    VOLUME = {61},
      YEAR = {2019},
    NUMBER = {3},
     PAGES = {443--473},
      ISSN = {0036-1445,1095-7200},
   MRCLASS = {42C15 (42C30 65T40)},
  MRNUMBER = {3989238},
MRREVIEWER = {Fritz\ Keinert},
       DOI = {10.1137/17M1114697},
}

@article{BultheelDeMoor2000,
title = {Rational approximation in linear systems and control},
journal = {Journal of Computational and Applied Mathematics},
volume = {121},
number = {1},
pages = {355-378},
year = {2000},
issn = {0377-0427},
doi = {https://doi.org/10.1016/S0377-0427(00)00339-3},
author = {A. Bultheel and B.De Moor},
keywords = {Rational approximation, Linear system theory, Model reduction, Identification}
}

@article {NinnessGustafsson1997,
    AUTHOR = {Ninness, Brett and Gustafsson, Fredrik},
     TITLE = {A unifying construction of orthonormal bases for system
              identification},
   JOURNAL = {IEEE Trans. Automat. Control},
  FJOURNAL = {Institute of Electrical and Electronics Engineers.
              Transactions on Automatic Control},
    VOLUME = {42},
      YEAR = {1997},
    NUMBER = {4},
     PAGES = {515--521},
      ISSN = {0018-9286,1558-2523},
   MRCLASS = {93B30 (93B40)},
  MRNUMBER = {1442586},
       DOI = {10.1109/9.566661},
}

@article {BrubeckTrefethen2022,
    AUTHOR = {Brubeck, Pablo D. and Trefethen, Lloyd N.},
     TITLE = {Lightning {S}tokes solver},
   JOURNAL = {SIAM J. Sci. Comput.},
  FJOURNAL = {SIAM Journal on Scientific Computing},
    VOLUME = {44},
      YEAR = {2022},
    NUMBER = {3},
     PAGES = {A1205--A1226},
      ISSN = {1064-8275,1095-7197},
   MRCLASS = {65N35 (65N22 76D07)},
  MRNUMBER = {4418039},
       DOI = {10.1137/21M1408579},
}

@article {Trefethen2020,
    AUTHOR = {Trefethen, Lloyd N.},
     TITLE = {Numerical conformal mapping with rational functions},
   JOURNAL = {Comput. Methods Funct. Theory},
  FJOURNAL = {Computational Methods and Function Theory},
    VOLUME = {20},
      YEAR = {2020},
    NUMBER = {3-4},
     PAGES = {369--387},
      ISSN = {1617-9447,2195-3724},
   MRCLASS = {30C30 (41A20 65E10)},
  MRNUMBER = {4175490},
MRREVIEWER = {Chenglie\ Hu},
       DOI = {10.1007/s40315-020-00325-w},
}

@inproceedings {VanBarelBultheel1994,
    AUTHOR = {Van Barel, Marc and Bultheel, Adhemar},
     TITLE = {Discrete linearized least-squares rational approximation on
              the unit circle},
 BOOKTITLE = {Proceedings of the {F}ifth {I}nternational {C}ongress on
              {C}omputational and {A}pplied {M}athematics ({L}euven, 1992)},
   JOURNAL = {J. Comput. Appl. Math.},
  FJOURNAL = {Journal of Computational and Applied Mathematics},
    VOLUME = {50},
      YEAR = {1994},
    NUMBER = {1-3},
     PAGES = {545--563},
      ISSN = {0377-0427,1879-1778},
   MRCLASS = {65D15 (41A20)},
  MRNUMBER = {1284288},
MRREVIEWER = {Lee\ L.\ Keener},
       DOI = {10.1016/0377-0427(94)90327-1},
}

@Article{Baddoo2020,
AUTHOR = {Baddoo, Peter J.},
TITLE = {Lightning Solvers for Potential Flows},
JOURNAL = {Fluids},
VOLUME = {5},
YEAR = {2020},
NUMBER = {4},
ARTICLE-NUMBER = {227},
ISSN = {2311-5521},
DOI = {10.3390/fluids5040227}
}

@book{Trefethen2014,
author = {Driscoll, T. A. and Hale N. and Trefethen, L. N.},
title = {Chebfun guide},
publisher = {Pafnuty Publications, Oxford},
year = {2014},
note = {Available at https://www.chebfun.org/}
}

@article {AmmarHe1995,
    AUTHOR = {Ammar, Gregory S. and He, Chun Yang},
     TITLE = {On an inverse eigenvalue problem for unitary {H}essenberg
              matrices},
   JOURNAL = {Linear Algebra Appl.},
  FJOURNAL = {Linear Algebra and its Applications},
    VOLUME = {218},
      YEAR = {1995},
     PAGES = {263--271},
      ISSN = {0024-3795,1873-1856},
   MRCLASS = {15A18},
  MRNUMBER = {1324063},
MRREVIEWER = {Mei-Qin\ Chen},
       DOI = {10.1016/0024-3795(93)00188-6},
}

@incollection {AmmarGraggReichel,
    AUTHOR = {Ammar, Gregory and Gragg, William and Reichel, Lothar},
     TITLE = {Constructing a unitary {H}essenberg matrix from spectral data},
 BOOKTITLE = {Numerical linear algebra, digital signal processing and
              parallel algorithms ({L}euven, 1988)},
    SERIES = {NATO Adv. Sci. Inst. Ser. F: Comput. Systems Sci.},
    VOLUME = {70},
     PAGES = {385--395},
 PUBLISHER = {Springer, Berlin},
      YEAR = {1991},
      ISBN = {3-540-52300-6},
   MRCLASS = {15A18 (65F15)},
  MRNUMBER = {1150072},
}

@book {Lund18,
    AUTHOR = {Lund, Kathryn},
     TITLE = {A {N}ew {B}lock {K}rylov {S}ubspace {F}ramework with
              {A}pplications to {F}unctions of {M}atrices {A}cting on
              {M}ultiple {V}ectors},
      NOTE = {Thesis (Ph.D.)--Temple University},
 PUBLISHER = {ProQuest LLC, Ann Arbor, MI},
      YEAR = {2018},
     PAGES = {212},
      ISBN = {978-0355-95481-4},
   MRCLASS = {99-05},
  MRNUMBER = {3818840},
}

@article {FaghihVanBaVanBuVande2025,
    AUTHOR = {Faghih, Amin and Van Barel, Marc and Van Buggenhout, Niel and
              Vandebril, Raf},
     TITLE = {A rational {A}rnoldi approach for {S}obolev rational least
              squares problems},
   JOURNAL = {Numer. Linear Algebra Appl.},
  FJOURNAL = {Numerical Linear Algebra with Applications},
    VOLUME = {32},
      YEAR = {2025},
    NUMBER = {2},
      ISSN = {1070-5325,1099-1506},
   MRCLASS = {65D15 (41A10 41A20 65F05 65F25)},
  MRNUMBER = {4891043},
       DOI = {10.1002/nla.70017},
}

@article{FaghihRinelliVanBaVandeVerme2025,
   abstract = {In this paper, we develop algorithms for computing the recurrence coefficients corresponding to multiple orthogonal polynomials on the step-line. We reformulate the problem as an inverse eigenvalue problem, which can be solved using numerical linear algebra techniques. We consider two approaches: the first is based on the link with block Krylov subspaces and results in a biorthogonal Lanczos process with multiple starting vectors; the second consists of applying a sequence of Gaussian eliminations on a diagonal matrix to construct the banded Hessenberg matrix containing the recurrence coefficients. We analyze the accuracy and stability of the algorithms with numerical experiments on the ill-conditioned inverse eigenvalue problemshave related to Kravchuk and Hahn polynomials, as well as on other better conditioned examples.},
   author = {Amin Faghih and Michele Rinelli and Marc Van Barel and Raf Vandebril and Robbe Vermeiren},
   keywords = {cs.NA,math.NA},
   month = {6},
   journal = {arXiv (preprint)},
   Publisher = {KU Leuven},
   title = {Krylov and core transformation algorithms for an inverse eigenvalue problem to compute recurrences of multiple orthogonal polynomials},
   doi = {10.48550/arXiv.2506.19796},
   year = {2025},
}

\end{document}